\DeclareTextFontCommand{\emph}{\color{blue}\em}
\newcommand{\precdot}{\mathrel{\ooalign{$\prec$\cr
  \hidewidth\raise0.001ex\hbox{$\cdot\mkern0.6mu$}\cr}}}
\newtheorem{theorem}{Theorem}[section]
\newtheorem{proposition}[theorem]{Proposition}
\newtheorem{lemma}[theorem]{Lemma}
\newtheorem{corollary}[theorem]{Corollary}
\theoremstyle{definition}
\newtheorem{ex}[theorem]{Example}
\newtheorem{definition}[theorem]{Definition}
\theoremstyle{remark}
\newtheorem*{remark}{Remark}
\DeclareMathOperator{\wt}{\mathrm{wt}}
\newcommand{\op}[1]{\overset{#1}{\rightsquigarrow}}
\DeclareMathOperator{\mul}{\mathrm{mul}}
\newcommand{\nb}{\mathcal{NB}}
\DeclareMathOperator{\supp}{S}
\newcommand{\di}[1]{B(#1)}
\newcommand{\none}{\mathbf{None}}
\title{Boolean Schubert Structure Coefficients}
\author{Yibo Gao}
\address{Beijing International Center for Mathematical Research, Peking University, Beijing 100871, China}
\email{gaoyibo@bicmr.pku.edu.cn}
\author{Hai Zhu}
\address{Department of Mathematics, UC San Diego, La Jolla, CA, 92093, USA}
\email{haz138@ucsd.edu}
\subjclass[2020]{14N15, 05E14, 20F55}
\date{\today}
\begin{document}
\pagestyle{plain}
\begin{abstract}
The Schubert problem asks for combinatorial models to compute structure constants of the cohomology ring with respect to Schubert classes and has been an important open problem in algebraic geometry and combinatorics that guided fruitful research for decades. In this paper, we provide an explicit formula for the (equivariant) Schubert structure constants $c_{uv}^w$ across all Lie types when the elements $u,v,w$ are boolean. In particular, in type $A$, all Schubert structure constants on boolean elements are either $0$ or $1$.
\end{abstract}
\maketitle

\section{Introduction}\label{sec:intro}

Let $G$ be a complex, connected, reductive algebraic group and $B$ be a Borel subgroup of $G$ with a maximal torus $T$. The homogeneous space $G/B$ is called the \emph{generalized flag variety}, which admits a \emph{Bruhat decomposition} $\sqcup_{w\in W}X_w^{\circ}$ into open \emph{Schubert cells}, whose closures are the \emph{Schubert varieties} $\{X_w\:|\: w\in W\}$, indexed by the \emph{Weyl group} $W=N_G(T)/T$. Let $\sigma_w\in H^*(G/B;\mathbb{Z})$ be the Poincar\'e dual of the fundamental class of $X_w$. 

The \emph{Schubert problem} asks for combinatorial interpretations of the structure constants $c_{uv}^w\in\mathbb{Z}_{\ge 0}$ of $H^*(G/B;\mathbb{Z})$ appearing in the expansion $\sigma_u\cdot\sigma_v=\sum_w c_{uv}^w\sigma_w$. It has been a major open problem in algebraic geometry and combinatorics for decades, guiding fruitful research in recent years. We mention a few beautiful results here in the massive literature: the most classical Chevalley-Monk formula \cite{monk}, the Pieri rule \cite{sottile-pieri}, the separated descent case \cite{fan2025bumpless,huang-separated-descent,knutson-separated-descent-puzzles}, puzzle rules for the Grassmannian \cite{knutson-tao-puzzles,knutson-tao-woodward-puzzles}, a survey on the equivariant Schubert calculus of the Grassmannian \cite{robichaux-survey}, $2$ or $3$ step partial flag varieties \cite{buch-kresch-purbhoo-tamvakis-puzzle,buch-kresch-tamvakis-GW,knutson-zinn-justin-puzzle1}, and various others working in richer cohomology theories. 

The goal of this paper is to make progress towards the Schubert problem. We describe an explicit rule (Corollory~\ref{cor:main}) for the Schubert structure constants $c_{uv}^w$ across all Lie types when the element $w$ is \emph{boolean}, a previously unexplored family of the Schubert problem, with connections to the Pieri's rule \cite{sottile-pieri} and hook's rule \cite{postnikov-FK}. Interestingly, our formula illustrates certain ``multiplicity-freeness" in type $A_n$, where all $c_{uv}^w\in\{0,1\}$ when $w$ is boolean. Boolean elements play an important role in the study of Schubert calculus. The Schubert variety $X_w$ is a toric variety if and only if $w$ is boolean \cite{karupp-boolean}, and boolean elements can be used to study spherical Schubert varieties \cite{levi-spherical-1,levi-spherical-2}. 

We work in the generality of the torus equivariant cohomology ring $H_T^*(G/B;\mathbb{Z})$. Let $\{\xi_w\:|\: w\in W\}$ be the \emph{equivariant Schubert classes} and write $\xi_u\cdot\xi_v=\sum d_{uv}^w\xi_w$ where $d_{uv}^w\in \mathbb{Z}[\Lambda]=H_{T}^{*}(\mathrm{pt};\mathbb{Z})$ is the \emph{equivariant Schubert structure constant}. 

\begin{remark}
The Kostant-Kumar formula \cite[Theorem 4.15]{kostant1986nil} provides $d_{uv}^w$ with a recursive formula \cite[Theorem 4.2]{richmond2023nil}. Moreover, in the boolean case, the Kostant-Kumar formula reduces to a combinatorially positive formula as follows.

Let $\mathcal{A} \coloneqq \mathbb{Z}[\alpha_1,\cdots,\alpha_n]$ denote the polynomial algebra in the simple roots and define the divided difference operator $\partial_j \, : \, \mathcal{A} \rightarrow \mathcal{A}$ as
\[\partial_j(p) \coloneqq \frac{s_j(p) - p}{\alpha_j}.\]
Let $w \in W$ be a boolean element with support set $S(w)=\{ i_1,i_2,\cdots,i_k \} $ and fix a reduced word $w = s_{i_1}s_{i_2}\cdots s_{i_k}$. Consider $u,v \le w$ and note that $u,v$ must be boolean as well. Furthermore, $u,v$ are uniquely determined by their support sets $S(u),S(v) \subseteq S(w)$. According to \cite[Subsection 4.4]{richmond2023nil}, the Kostant-Kumar formula says that
\begin{equation}\label{eq:kk-formula}
    d_{u,v}^w = B_{i_1} \circ B_{i_2} \circ \cdots \circ B_{i_k}(1)
\end{equation}
where the operator $B_j \, : \, \mathcal{A} \rightarrow \mathcal{A}$ is defined as
\[B_j(p)\coloneqq\begin{cases}
    \alpha_j \cdot s_j (p) &\text{if $s_j \in S(u) \cap S(v)$}\\
    s_j(p) &\text{if $s_j \in S(u) \bigtriangleup S(v)$}\\
    \partial_j(p) &\text{if $s_j \not\in S(u) \cup S(v)$}.
\end{cases}\]
Now if $p \in \mathcal{A}$ with non-negative coefficients does not contain the variable $\alpha_j$, then it can be shown using the twisted Leibniz formula \cite[Theorem 11.1.7 part (h)]{kumar2002kac} that $\partial_j(p)$ is a polynomial with non-negative coefficients in the simple roots. Hence Equation~\eqref{eq:kk-formula} is a positive formula in the boolean case since the root variable $\alpha_j$ only gets introduced when applying $B_j$.

However, it is not immediate from Equation~\eqref{eq:kk-formula} that boolean structure constants $c_{uv}^w \in \{ 0,1\}$ in type A, which will be proved in Corollary~\ref{cor:0-1}. 
\end{remark}

The following is our main theorem.

\begin{theorem}\label{thm:main}
    For boolean elements $u,v,w\in W$, 
    \[
        d_{uv}^w =\begin{cases}
            \sum\limits_{u\op{\supp(v)}w }\mul(u\op{\supp(v)}w)\cdot\wt(u\op{\supp(v)}w), &\text{if there exists a boolean insertion path $v\op{\supp(u)}w$} \\ 
            
            0, &\text{otherwise}
        \end{cases} 
    \]
    where the summation is over all boolean insertion paths $u\op{\supp(v)}w$.
\end{theorem}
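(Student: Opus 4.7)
The plan is to prove Theorem~\ref{thm:main} by induction on $|\supp(w)|$, using the Kostant--Kumar formula~\eqref{eq:kk-formula} as the starting point. Since $w$ is boolean with a fixed reduced word $w = s_{i_1}s_{i_2}\cdots s_{i_k}$, we have $d_{uv}^w = B_{i_1}\circ B_{i_2} \circ \cdots \circ B_{i_k}(1)$, and the strategy is to track the polynomials produced by this composition and match them term-by-term with a weighted sum over partial boolean insertion paths.

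First I would recall the combinatorial definition of a boolean insertion path $u \op{\supp(v)}w$ and its statistics $\mul$ and $\wt$. Such a path records a way of inserting the simple reflections indexed by $\supp(v)$ into a reduced word of $u$ to obtain $w$, where each local step contributes multiplicatively to $\wt$ by certain simple roots and combinatorially to $\mul$. With this in hand, I would prove a refined statement by induction on the suffix: for every $1\le j\le k+1$, the polynomial $B_{i_j}\circ\cdots\circ B_{i_k}(1)$ equals a weighted sum over partial boolean insertion paths from $u$ to $w$ restricted to the window $s_{i_j}s_{i_{j+1}}\cdots s_{i_k}$, with appropriate intermediate endpoints tracking the simple reflections already consumed from $\supp(u)$ and $\supp(v)$.

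The induction step proceeds by case analysis on the operator $B_{i_j}$ dictated by the membership of $s_{i_j}$. If $s_{i_j}\in \supp(u)\cap \supp(v)$, the operator multiplies by $\alpha_{i_j}$ and twists by $s_{i_j}$, matching a path step that consumes $s_{i_j}$ as a shared letter and contributes $\alpha_{i_j}$ to the weight. If $s_{i_j}\in \supp(u)\triangle\supp(v)$, the operator is the reflection $s_{i_j}$, corresponding to a path step consuming $s_{i_j}$ from exactly one of $u,v$ and merely twisting the accumulated root factors. If $s_{i_j}\notin \supp(u)\cup \supp(v)$, the operator is the divided difference $\partial_{i_j}$, corresponding to steps where the path skips this letter of $w$. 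For the vanishing half of the theorem, I would show that the nonexistence of a boolean insertion path $v\op{\supp(u)}w$ forces the running polynomial to land in the kernel of some subsequent $\partial_{i_j}$ (or, more concretely, forces a required letter to be unavailable), yielding $d_{uv}^w = 0$.

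The main obstacle I expect is the divided-difference case. Unlike the other two operators, $\partial_{i_j}$ does not simply twist or rescale; it expands into a sum of monomials whose indexing must be put in bijective correspondence with the extensions of boolean insertion paths by a ``skipped'' step. This requires careful bookkeeping of how $\partial_{i_j}$ acts on a product of simple roots already accumulated in $\wt$, using the twisted Leibniz rule cited in the remark, together with a matching rule for how $\mul$ transforms when a skipped step is prepended. Once this bijection is pinned down and the weight factors are verified to agree across all three operator types, the induction closes and simultaneously yields both the explicit sum formula and the vanishing criterion.
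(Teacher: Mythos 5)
Your approach is genuinely different from the paper's. The paper never uses the Kostant--Kumar formula in its proof (it only appears in a remark to observe positivity); instead it proves a boolean version of the equivariant Chevalley rule modulo the ideal $\nb$ spanned by non-boolean classes (Lemma~\ref{lem:monk}), iterates it to get Corollary~\ref{cor:monk}, and then establishes a crucial symmetry lemma (Lemma~\ref{lem:sym}) showing $[\xi_w](\xi_u\xi_v)=[\xi_w]\bigl(\xi_u\prod_{\beta\in\supp(v)}\xi_{s_\beta}\bigr)$, via a disjointness argument on the terms $\xi_{u'}\xi_{v'}$ with fixed supports. The theorem then drops out. Your plan, if completed, would be an alternative proof that might illuminate the combinatorics of the recursive operators $B_j$.

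However, as written there is a genuine gap. You flag the divided-difference case $\partial_{i_j}$ as the main obstacle and then describe what you ``would'' do to resolve it, without resolving it; this is precisely where the real content of the argument lives. More fundamentally, your induction runs over the letters $i_1,\dots,i_k$ of a fixed reduced word of $w$, whereas the summation in Theorem~\ref{thm:main} is over boolean insertion paths $u\op{\supp(v)}w$ indexed by a chosen ordering of $\supp(v)$, with intermediate states that are boolean elements. These are two incompatible iteration schemes, and you never propose the reindexing or bijection needed to compare them; the ``partial boolean insertion paths restricted to a window of the reduced word'' you invoke are not objects defined in the paper, and it is not clear they can be made to match the accumulating polynomial $B_{i_j}\circ\cdots\circ B_{i_k}(1)$ step by step. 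Finally, your treatment of the vanishing half (``forces the running polynomial to land in the kernel of some subsequent $\partial_{i_j}$'') is not argued; note that the vanishing condition in the theorem is governed by the \emph{asymmetric} criterion that a path $v\op{\supp(u)}w$ exists even when paths $u\op{\supp(v)}w$ might, and reconciling that asymmetry with the word-of-$w$ recursion is nontrivial and unaddressed.
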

The \emph{boolean insertion path} consists of the \emph{boolean insertion steps} that encode the equivariant Chevalley rule on boolean elements. These steps are also called the \emph{$k$-Bruhat order} in $H^*(\mathrm{Fl}_n;\mathbb{Z})$, and have been very useful in the Schubert problem \cite{lenart-sottile-skew-Schubert,lenart-growth-diagram}. The appearance of our boolean insertion path in the formula is not surprising. Interestingly, for boolean elements, these paths precisely govern the structure constants in a subtraction-free and multiplicity-free way (Proposition~\ref{prop:insert}). 

The precise definitions in Theorem~\ref{thm:main} are given in Definitions~\ref{def:path-wt} and \ref{def:path-mul}. We remark that $\mul(u\op{\supp(v)}w)\wt(u\op{\supp(v)}w)$ can be replaced by $\mul(v\op{\supp(u)}w)\wt(v\op{\supp(u)}w)$ in Theorem~\ref{thm:main}, making the formula symmetric.

We also have an ordinary cohomology version of Theorem~\ref{thm:main}.
\begin{corollary}\label{cor:main}
     For boolean elements $u,v,w\in W$,
    \[c_{uv}^w \! = \! \begin{cases}
            \sum\limits_{u\op{\supp(v)}w }\!\mul(u\!\op{\supp(v)}\!w),\! &\text{if there exists a non-equivariant boolean insertion path $v\!\op{\supp(u)}\!w$}\\ 
            
            0, &\text{otherwise}
        \end{cases} \] where the summation is over all non-equivariant boolean insertion paths $u\op{\supp(v)}w$.
\end{corollary}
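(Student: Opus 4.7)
The plan is to deduce Corollary~\ref{cor:main} from Theorem~\ref{thm:main} by specializing the equivariant parameters to zero. The canonical projection $\mathbb{Z}[\Lambda] \twoheadrightarrow \mathbb{Z}$ sending every simple root $\alpha_i$ to $0$ induces a ring surjection $H_T^*(G/B;\mathbb{Z}) \twoheadrightarrow H^*(G/B;\mathbb{Z})$ under which $\xi_w \mapsto \sigma_w$. Applying this surjection to $\xi_u \cdot \xi_v = \sum_w d_{uv}^w\, \xi_w$ and comparing with $\sigma_u \cdot \sigma_v = \sum_w c_{uv}^w\, \sigma_w$ yields
\[c_{uv}^w = d_{uv}^w \big|_{\alpha_1 = \cdots = \alpha_n = 0},\]
so the task reduces to extracting the constant term of the right-hand side of Theorem~\ref{thm:main}.

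I would then read off the specialization term by term. By Definition~\ref{def:path-wt}, $\wt(P)$ for a boolean insertion path $P$ is a monomial in the simple roots whose degree records exactly the number of equivariant steps of $P$; in particular $\wt(P) = 1$ iff $P$ has no equivariant steps, which by construction is precisely the condition that $P$ be a \emph{non-equivariant} boolean insertion path. The specialization $\alpha_i \mapsto 0$ therefore kills every summand in Theorem~\ref{thm:main} coming from a path with at least one equivariant step and preserves each summand $\mul(P)$ from a non-equivariant path, giving
\[c_{uv}^w = \sum_{\substack{u \op{\supp(v)} w\\ \text{non-equivariant}}} \mul\!\left(u \op{\supp(v)} w\right),\]
which is the first branch of Corollary~\ref{cor:main}.

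It remains to justify the ``otherwise'' clause. Using the symmetry observed in the remark after Theorem~\ref{thm:main} (swapping the roles of $u$ and $v$ yields the same value of $d_{uv}^w$), the existence of a non-equivariant boolean insertion path $v \op{\supp(u)} w$ is equivalent to the existence of a non-equivariant one $u \op{\supp(v)} w$. When no such path exists, either no boolean insertion path exists at all, in which case Theorem~\ref{thm:main} already gives $d_{uv}^w = 0$, or every contributing path carries a weight of strictly positive degree, in which case the constant term of $d_{uv}^w$ vanishes; either way $c_{uv}^w = 0$. The only point of substance is the weight-degree bookkeeping for boolean insertion paths, which should follow directly from Definition~\ref{def:path-wt}; everything else is a formal consequence of specializing $H_T^*$ to $H^*$.
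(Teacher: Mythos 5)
Your approach is the same as the paper's: specialize the equivariant parameters to zero in Theorem~\ref{thm:main} and observe that only non-equivariant boolean insertion paths survive. The paper's own proof is a two-line version of exactly this. One small imprecision: you describe $\wt(P)$ as ``a monomial in the simple roots,'' but an equivariant step contributes $\sum_{L} t_{\gamma}$, a sum of linear terms, so the weight of a path is a product of such sums, hence a polynomial rather than a monomial. This does not affect the argument, since what matters is only that a path with at least one equivariant step has weight with zero constant term, and a fully non-equivariant path has weight $1$. Your discussion of the ``otherwise'' clause, invoking the $u\leftrightarrow v$ symmetry from the remark after Theorem~\ref{thm:main} to align the hypothesis (existence of a non-equivariant path $v\op{\supp(u)}w$) with the summation side (non-equivariant paths $u\op{\supp(v)}w$), is correct and is in fact a point the paper's terse proof leaves implicit.
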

Furthermore, $c_{uv}^w\in\{0,1\}$ in type $A$ (Corollary~\ref{cor:0-1}). 

This paper is organized as follows. In Section~\ref{sec:prelim}, we provide the necessary background on root systems, Weyl groups, the equivariant Chevalley formula, boolean elements and their boolean diagrams. In Section~\ref{sec:proof}, we introduce the boolean insertion algorithms and prove the main theorem (Theorem~\ref{thm:main}). In Section~\ref{sec:algo}, we give a fast algorithm to compute $c_{uv}^w$ for boolean elements.

\section{Preliminaries}\label{sec:prelim}
\subsection{Root systems and Weyl groups}
Let $\Phi:=\Phi(\mathfrak{g},T)$ be the \emph{root system} of weights for the adjoint action of $T$ on the Lie algebra $\mathfrak{g}$ of $G$, with a decomposition $\Phi^+\sqcup\Phi^-$ into \emph{positive roots} and negative roots. Let $\Delta=\{\alpha_1,\ldots,\alpha_r\}\subseteq\Phi^+$ be the corresponding set of \emph{simple roots}, which is a basis of $\mathfrak{h}_{\mathbb{R}}^{*}$, the real span of all roots. Let $\langle\cdot,\cdot\rangle$ be the nondegenerate scalar product on $\mathfrak{h}_{\mathbb{R}}^{*}$ induced by the Killing form. For each root $\alpha\in\Phi$, denote by $s_{\alpha}$ the corresponding reflection. Explicitly, we have \[s_{\alpha}\gamma=\gamma-\frac{2\langle\alpha,\gamma\rangle}{\langle\alpha,\alpha\rangle}\alpha.\]
For simplicity of notation, write the \emph{simple reflections} as $s_i:=s_{\alpha_i}$ for $\alpha_i\in\Delta$. For each root $\alpha\in\Phi$, we have a \emph{coroot} $\alpha^{\vee}=2\alpha/\langle\alpha,\alpha\rangle$. The \emph{fundamental weights} $\{\omega_{\alpha}\:|\:\alpha\in\Delta\}$ are the dual basis to the simple coroots $\{\alpha^{\vee}\:|\:\alpha\in\Delta\}$. Let $\Lambda$ be the weight space and we identify $\mathbb{Z}[\Lambda]$ as $\mathbb{Z}[\mathbf{t}]$, the polynomial ring in $\{t_{\alpha}:=\omega_{\alpha}-s_{\alpha}(\omega_{\alpha})\:|\:\alpha\in\Delta\}$. Here $s_\alpha(\omega_\alpha)$ is given by the natural action of the Weyl group $W$ on $\Lambda$ defined by $\langle w(\omega_\alpha),\beta\rangle\coloneqq \langle\omega_\alpha ,w^{-1}(\beta)\rangle$ for $w\in W$ and $\alpha,\beta\in\Delta$.

The following definition of directed Dynkin diagrams may slightly differ from the classical definition of Dynkin diagrams.

\begin{definition}\label{def:dynkin}
The \emph{directed Dynkin diagram} of $\Phi$ with a choice of simple root $\Delta$ is a directed graph whose vertex set is $\Delta$ with $-2\langle\alpha,\beta\rangle/\langle\beta,\beta\rangle\in\mathbb{N}$ edges going from $\alpha$ to $\beta$ for $\alpha\neq\beta\in\Delta$.
\end{definition}

\begin{ex}\label{ex:dynkin}
Consider the directed Dynkin diagram of type $C_3$. There are TWO edges connecting $\alpha_1$ and $\alpha_2$, one going from $\alpha_1$ to $\alpha_2$ and the other going from $\alpha_2$ to $\alpha_1$, although the classical Dynkin diagram of type $C_3$ (Figure~\ref{fig:dynkin}) only shows one edge between $\alpha_1$ and $\alpha_2$. Similarly, there are THREE edges between $\alpha_2$ and $\alpha_3$, one going from $\alpha_2$ to $\alpha_3$ and the other two going from $\alpha_3$ to $\alpha_2$, although the classical Dynkin diagram of type $C_3$ (Figure~\ref{fig:dynkin}) only shows two edge between $\alpha_2$ and $\alpha_3$. Note that whenever we draw a classical Dynkin diagram, we in fact use Definition~\ref{def:dynkin} to understand the directed structure on it as mentioned above.
    \begin{figure}[h!]
    \centering
    \begin{dynkinDiagram}[label,label macro/.code={\alpha_{\drlap{#1}}},edge length=.75cm,mark=o]C3
    \end{dynkinDiagram}
    \caption{The directed Dynkin diagram of type $C_3$.}
    \label{fig:dynkin}
    \end{figure}
\end{ex}

The \emph{Weyl group} is generated by the reflections $\{s_{\beta}\:|\:\beta\in\Phi\}$. It is equipped with a \emph{Coxeter length} function $\ell(\cdot)$ where $\ell(w)=\min\{\ell\:|\: w=s_{i_1}\cdots s_{i_{\ell}}\}$. Such an expression $w=s_{i_1}\cdots s_{i_{\ell}}$ is called a \emph{reduced word} of $w$ if $\ell(w)=\ell$. The \emph{Bruhat order} on $W$ is generated by $w<ws_{\beta}$ if $\ell(w)<\ell(ws_{\beta})$ for $\beta\in\Phi^+$. For $w\in W$, its \emph{support} is \[\supp(w):=\{\alpha_i\:|\: s_i\text{ appears in some reduced word of }w\}.\]
\begin{remark}
    Simple reflection $s_i$ appears in some reduced word of $w$ if and only if $s_i$ appears in all reduced words of $w$. Consequently, we have an equivalent definition
    \[\supp(w):=\{\alpha_i\:|\: s_i\text{ appears in all reduced words of }w\}.\]
\end{remark} 

A straightforward calculation gives us the following.
\begin{proposition}\label{prop:game}
    For $\alpha\in\Delta$ and $\beta=\sum\limits_{\alpha_i\in\Delta}n_i\alpha_i$,
    $s_{\alpha}(\beta)=\sum_{\alpha_i\in\Delta}n_i^{\prime}\alpha_i$
    where $n_i^{\prime}=n_i$ for $\alpha_i\neq\alpha$ and $n_j^{\prime}=\sum\limits_{k\neq j}\left(-\frac{2\langle\alpha_k,\alpha_j\rangle}{\langle\alpha_j,\alpha_j\rangle}n_k\right)-n_j$ for $\alpha_j=\alpha$.
\end{proposition}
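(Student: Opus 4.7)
The plan is to prove Proposition~\ref{prop:game} by direct computation from the reflection formula $s_\alpha(\gamma) = \gamma - \frac{2\langle \alpha, \gamma \rangle}{\langle \alpha, \alpha \rangle}\alpha$ stated just above. First I would substitute $\alpha = \alpha_j$ and $\gamma = \beta = \sum_i n_i \alpha_i$ into this formula; the correction term is the scalar multiple $\frac{2\langle \alpha_j, \beta \rangle}{\langle \alpha_j, \alpha_j \rangle}\alpha_j$, which lies along $\alpha_j$ alone. Consequently, for every index $i$ with $\alpha_i \neq \alpha_j$, the coefficient in the simple-root basis is untouched, giving $n_i' = n_i$ immediately.

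Next, using bilinearity of $\langle \cdot, \cdot \rangle$, I would expand $\langle \alpha_j, \beta \rangle = \sum_i n_i \langle \alpha_j, \alpha_i \rangle$ and separate the $i = j$ summand from the rest. This yields $\frac{2\langle \alpha_j, \beta \rangle}{\langle \alpha_j, \alpha_j \rangle} = 2 n_j + \sum_{k \neq j} \frac{2\langle \alpha_k, \alpha_j \rangle}{\langle \alpha_j, \alpha_j \rangle} n_k$. Subtracting this from the original coefficient $n_j$ of $\alpha_j$ in $\beta$ gives $n_j' = n_j - 2 n_j - \sum_{k \neq j} \frac{2\langle \alpha_k, \alpha_j \rangle}{\langle \alpha_j, \alpha_j \rangle} n_k = -n_j + \sum_{k \neq j}\left(-\frac{2\langle \alpha_k, \alpha_j \rangle}{\langle \alpha_j, \alpha_j \rangle}\right) n_k$, which is exactly the stated formula.

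There is no real obstacle here: the proposition is essentially a restatement of the reflection formula in simple-root coordinates, and the proof is pure bookkeeping using bilinearity of $\langle\cdot,\cdot\rangle$. The only mild pitfalls to watch for are signs and the asymmetric role of $j$ — the denominator always involves $\langle \alpha_j, \alpha_j \rangle$ rather than $\langle \alpha_k, \alpha_k \rangle$, reflecting that $s_{\alpha_j}$ is determined by the coroot $\alpha_j^\vee$ rather than by any symmetric combination of the two indices.
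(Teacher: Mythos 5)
Your computation is correct and is exactly the "straightforward calculation" the paper alludes to without writing out: expand the reflection formula $s_{\alpha}\gamma=\gamma-\frac{2\langle\alpha,\gamma\rangle}{\langle\alpha,\alpha\rangle}\alpha$ in the simple-root basis, observe that the correction lies along $\alpha_j$ so the other coefficients are unchanged, and use bilinearity to split off the $i=j$ term. Nothing is missing and the sign bookkeeping is right.
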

Intuitively, to obtain $s_{\alpha}\beta$ from $\beta$, both written in the basis of $\Delta$, we replace the coefficient of $\alpha$ by the sum of the coefficients of the neighbors of $\alpha$, weighted by the number of edges going from each neighbor of $\alpha$ to $\alpha$ in the directed Dynkin diagram, minus the coefficient of $\alpha$ itself in $\beta$.

The following result \cite[p.351, Theorem 19.1.2]{anderson2023equivariant} lets us do calculations in $H^*_T(G/B;\mathbb{Z})$.
\begin{theorem}[Equivariant Chevalley formula]\label{thm:chevalley}
For $\alpha\in\Delta$ and $v \in W$,
\[
\xi_v\cdot\xi_{s_{\alpha}}=(\omega_{\alpha}-v(\omega_{\alpha}))\xi_v+\sum_{\substack{w=vs_{\beta}\\\ell(w)=\ell(v)+1}}\langle\omega_{\alpha},\beta^{\vee}\rangle\ \xi_w
\]
in $H_T^{*}(G/B;\mathbb{Z})$, where we sum over positive roots $\beta\in\Phi^+$.
\end{theorem}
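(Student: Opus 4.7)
The plan is to use GKM (equivariant) localization. The torus-equivariant cohomology $H_T^*(G/B;\mathbb{Z})$ injects into $\bigoplus_{u\in W}\mathbb{Z}[\Lambda]$ via $\eta\mapsto(\eta|_u)_{u\in W}$, where $\eta|_u$ is the restriction to the $T$-fixed point indexed by $u$, and the Schubert classes satisfy $\xi_w|_u=0$ unless $u\ge w$. Two standard inputs I would invoke are (i) the divisor formula $\xi_{s_\alpha}|_u=\omega_\alpha-u(\omega_\alpha)$, and (ii) in the expansion $\xi_v\cdot\xi_{s_\alpha}=\sum_w c^w\xi_w$, each $c^w\in\mathbb{Z}[\Lambda]$ has degree $\ell(v)+1-\ell(w)\ge 0$, forcing $\ell(w)\le\ell(v)+1$. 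A Bruhat-induction argument on $u$ then shows $c^w=0$ unless $w\ge v$: for any $u\not\ge v$, we have $(\xi_v\cdot\xi_{s_\alpha})|_u=\xi_v|_u\cdot\xi_{s_\alpha}|_u=0$, and this propagates inductively to $c^u=0$. Combined, only $w=v$ and the covers $w$ of $v$ can contribute.

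Next I would pin down the two surviving coefficients by localizing at appropriate fixed points. Restricting at $u=v$ kills $\xi_w|_v$ for every $w>v$ and leaves $\xi_v|_v\cdot(\omega_\alpha-v\omega_\alpha)=c^v\xi_v|_v$, so $c^v=\omega_\alpha-v(\omega_\alpha)$. For a cover $w=vs_\beta$ of $v$, restricting at $u=w$ eliminates every other cover $w'\neq w$ (since $\ell(w')=\ell(w)$ and $w'\le w$ would force $w'=w$), giving
\[c^w\cdot\xi_w|_w=\xi_v|_w\bigl((\omega_\alpha-w\omega_\alpha)-(\omega_\alpha-v\omega_\alpha)\bigr)=\xi_v|_w\cdot v(\omega_\alpha-s_\beta\omega_\alpha)=\langle\omega_\alpha,\beta^\vee\rangle\cdot v\beta\cdot\xi_v|_w,\]
where I used $\omega_\alpha-s_\beta(\omega_\alpha)=\langle\omega_\alpha,\beta^\vee\rangle\,\beta$. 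The entire theorem therefore reduces to the single identity $v\beta\cdot\xi_v|_w=\xi_w|_w$ whenever $w=vs_\beta$ covers $v$.

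I expect this last step to be the main obstacle, since it is the genuine geometric input rather than algebraic bookkeeping. One approach is Billey's formula: extending a reduced word of $v$ by one simple reflection to a reduced word of $w$, both sides admit explicit expressions as sums over subwords from which the factor $v\beta$ drops out. A cleaner approach uses the moment graph of $G/B$ directly: the edge between the adjacent fixed points $v$ and $w$ carries weight $v\beta$, so every equivariant class $\eta$ satisfies $\eta|_v-\eta|_w\equiv 0\pmod{v\beta}$; applying this with $\eta=\xi_v$, combined with the known product formula for the diagonal values $\xi_v|_v$ and $\xi_w|_w$, pins down $\xi_v|_w$ and yields the ratio. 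Once this identity is in hand, $c^w=\langle\omega_\alpha,\beta^\vee\rangle$, completing the Chevalley formula.
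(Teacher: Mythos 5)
The paper does not prove this statement; it cites it directly from Anderson--Fulton (Theorem 19.1.2), so there is no proof in the paper to compare against. Your GKM-localization approach is a standard and legitimate way to establish the equivariant Chevalley formula, and the skeleton of your argument is sound: the degree bound together with the support lemma $[\xi_w](\xi_v\xi_{s_\alpha})=0$ for $w\not\ge v$ correctly isolates the contributing $w$ to $v$ itself and its Bruhat covers, restriction at $v$ gives $c^v=\omega_\alpha-v(\omega_\alpha)$, and restriction at a cover $w=vs_\beta$ reduces everything to the single identity $v\beta\cdot\xi_v|_w=\xi_w|_w$. That identity is indeed correct, and your first suggested route closes it cleanly: fix a reduced word $w=s_{i_1}\cdots s_{i_k}$ and set $\beta_j=s_{i_1}\cdots s_{i_{j-1}}(\alpha_{i_j})$; for a covering relation there is a unique index $j$ with $v=s_{i_1}\cdots\widehat{s_{i_j}}\cdots s_{i_k}$, Billey's formula gives $\xi_w|_w=\prod_{l}\beta_l$ and $\xi_v|_w=\prod_{l\ne j}\beta_l$, and a short computation shows $v\beta=\beta_j$, so the ratio is exactly $v\beta$.

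The one genuine gap is in the alternative you call ``cleaner.'' The GKM edge condition $\xi_v|_v\equiv\xi_v|_w\pmod{v\beta}$, together with the product formulas for the diagonal restrictions $\xi_v|_v$ and $\xi_w|_w$, does \emph{not} pin down $\xi_v|_w$: the congruence leaves an arbitrary polynomial multiple of $v\beta$ undetermined, and indeed $\xi_v|_w\ne\xi_v|_v$ in general (already for $v=s_1$, $w=s_2s_1$ in type $A_2$ one has $\xi_v|_v=\alpha_1$ but $\xi_v|_w=\alpha_1+\alpha_2$). So that route, as stated, does not suffice and you would have to fall back on the Billey/subword argument (or the Kostant--Kumar recursion) to actually compute $\xi_v|_w$ at a cover. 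With that correction, your proposal is a correct proof of the cited theorem.
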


\subsection{Boolean elements}
\begin{definition}
A Weyl group element $w\in W$ is \emph{boolean} if its lower Bruhat interval $[\mathrm{id},w]$ is isomorphic to a boolean lattice.
\end{definition}
The following Lemma is straightforward by the subword property \cite[Theorem 2.2.2]{Bjorner-Brenti}. See also \cite[Proposition 7.3]{tenner-boolean} and \cite[Proposition 3,1]{gao2020boolean}.
\begin{lemma}
An element $w\in W$ is boolean if and only if $w$ is a product of distinct simple reflections. In other words, $w$ is boolean if and only if $\ell(w)=|\supp(w)|$.
\end{lemma}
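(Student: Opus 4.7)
The plan is to prove the three-way equivalence: $w$ is boolean iff $w$ has a reduced word in which the simple reflections are pairwise distinct iff $\ell(w)=|\supp(w)|$. The last equivalence is immediate, since any reduced word of $w$ has length $\ell(w)$ and uses exactly the simple reflections corresponding to $\supp(w)$; those letters are pairwise distinct precisely when the two counts match.

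For the forward direction ($[\mathrm{id},w]$ boolean $\Rightarrow$ $\ell(w)=|\supp(w)|$), I would count atoms. A boolean lattice of rank $\ell(w)$ has exactly $\ell(w)$ atoms. In $[\mathrm{id},w]$, the atoms are the length-$1$ elements below $w$, i.e. the simple reflections $s_i\le w$; by the subword property these are precisely the simple reflections appearing in some reduced word of $w$, which is the set $\supp(w)$. Hence $|\supp(w)|=\ell(w)$.

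For the converse, fix a reduced word $w=s_{i_1}\cdots s_{i_\ell}$ with the $i_j$ pairwise distinct and, for each $S\subseteq\{1,\dots,\ell\}$, set $u_S:=\prod_{j\in S}s_{i_j}$, the product taken in the order induced by $S$. The key claim is that every $u_S$ is a reduced expression, so $\ell(u_S)=|S|$ and $\supp(u_S)=\{\alpha_{i_j}:j\in S\}$. I would prove this by induction on $|S|$: letting $m:=\max S$, the inductive hypothesis gives $\supp(u_{S\setminus\{m\}})=\{\alpha_{i_j}:j\in S\setminus\{m\}\}$, which does not contain $\alpha_{i_m}$ by distinctness of the $i_j$; hence $s_{i_m}$ is not a right descent of $u_{S\setminus\{m\}}$, and $\ell(u_S)=\ell(u_{S\setminus\{m\}}\, s_{i_m})=|S|$. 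Given the claim, distinct subsets yield distinct supports and therefore distinct elements $u_S$, so $S\mapsto u_S$ is injective, and the subword property makes it surjective onto $[\mathrm{id},w]$. The same subword-and-support bookkeeping identifies the induced Bruhat order with the subset order ($u_S\le u_T \Leftrightarrow S\subseteq T$), so $[\mathrm{id},w]$ is the boolean lattice on $\{1,\dots,\ell\}$.

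The main technical obstacle is the reducedness of $u_S$ for every $S$, which is the one step where the distinctness hypothesis is genuinely used; the rest is straightforward assembly on top of the subword property and the characterization of $\supp(\cdot)$ in terms of descents.
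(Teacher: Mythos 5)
Your proof is correct. The paper gives no proof of this lemma, declaring it "straightforward by the subword property" with citations, and your argument is a careful write-up of exactly that route: atoms of the boolean lattice giving $|\supp(w)|=\ell(w)$ in one direction, and the subword-indexed elements $u_S$ (with the inductive reducedness check via "$\alpha_{i_m}\notin\supp(u_{S\setminus\{m\}})\Rightarrow s_{i_m}$ is not a right descent") establishing the isomorphism $[\mathrm{id},w]\cong 2^{\{1,\dots,\ell\}}$ in the other.
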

We now view boolean elements visually using \emph{boolean diagrams}.
\begin{definition}\label{def:boolean-diagram}
For $w\in W$ that is boolean, its \emph{boolean diagram} $\di{w}$ is a directed graph on $\supp(w)$ such that $\alpha_{k}\rightarrow\alpha_{j}$ if $\alpha_j$ is connected to $\alpha_k$ by an edge in the Dynkin diagram of $W$ and $s_{j}$ appears before $s_{k}$ in any reduced words of $w$.
\end{definition}
\begin{remark}
    $\alpha_k\rightarrow\alpha_j$ in $B(w)$ indicates that $s_j$ appears before $s_k$ in all reduced words of $w$.
\end{remark}
For two boolean diagrams $B(u)$ and $B(w)$, we write $B(u)\subseteq B(w)$ if $\supp(u)\subseteq\supp(w)$ and if $\alpha_k\rightarrow\alpha_j$ in $B(u)$, we also have $\alpha_k\rightarrow\alpha_j$ in $B(w)$.

\begin{ex}\label{ex:boolean-diagram-example}
Consider $w=s_3s_2s_4s_5s_7$ in $W(E_7)$. The directed Dynkin diagram of type $E_7$ and the boolean diagram $B(w)$ marked with solid nodes are shown in Figure~\ref{fig:boolean-diagram-example}.
\begin{figure}[h!]
\centering
\dynkin[label,label macro/.code={\alpha_{\drlap{#1}}},edge
    length=.75cm,mark=o]E7
\qquad
\begin{dynkinDiagram}[label,label macro/.code={\alpha_{\drlap{#1}}},edge length=.75cm,mark=o]E7
        \foreach\r in {2,3,4,5,7} {\dynkinRootMark{*}\r}
        \draw[very thick, black,-latex]
            (root 4) to (root 3);
        \draw[very thick, black,-latex]
            (root 4) to (root 2);
        \draw[very thick, black,-latex]
            (root 5) to (root 4);
    \end{dynkinDiagram}
\caption{Left: the directed Dynkin diagram of type $E_7$. Right: the boolean diagram $B(w)$ for the boolean element $w=s_3s_2s_4s_5s_7$.}
\label{fig:boolean-diagram-example}
\end{figure}
\end{ex}
Let $\nb$ be the $\mathbb{Z}[\Lambda]$ linear subspace of $H_T^*(G/B;\mathbb{Z})$ spanned by the equivariant Schubert classes $\xi_w$ such that $w$ is not boolean. Note that the equivariant structure constant $d_{uv}^w$ is nonzero only when $u\leq w$, and that if $u\leq w$ and $u$ is not boolean, then $w$ is not boolean. Thus, $\nb$ is an ideal of $H_T^*(G/B;\mathbb{Z})$.

\section{Schubert structure constants for boolean elements}\label{sec:proof}
\subsection{The boolean insertion algorithms}
Now we define an operation that transforms one boolean element $u\in W$ into another boolean element $v\in W$ with respect to a simple root $\alpha$, which is denoted by $u\op{\alpha}v$. In fact, it encodes Equivariant Chevalley formula in Theorem~\ref{thm:chevalley} restricted to boolean elements. In particular, restricting to the cohomology ring in type $A$, $u\op{k}v$ exactly means that $v$ covers $u$ under the $k$-Bruhat order. We associate each operation with a multiplicity $\mul(u\op{\alpha}v)\in\mathbb{N}$ and weight $\wt(u\op{\alpha}v)\in\mathbb{Z}[\mathbf{t}]=\mathbb{Z}[\Lambda]$ as a nonzero polynomial with non-negative coefficients. Here the indeterminates
$t_{\gamma}=\omega_{\gamma}-s_{\gamma}(\omega_{\gamma})$ are indexed by simple roots.

All the paths in this article cannot pass a vertex repeatedly.

\begin{definition}\label{def:op}
For boolean elements $u,v\in W$ and $\alpha\in\Delta$, we write $u\op{\alpha}v$ and call it a \emph{boolean insertion} if one of the following mutually exclusive events happens:
\begin{enumerate}
\item\label{ev:1} $\alpha\in\supp(u)$, $\ell(v)=\ell(u)+1$, $\di{u}\subseteq\di{v}$ and there is a directed path in $\di{v}$ from $\alpha$ to the unique vertex of $\di{v}\setminus\di{u}$. In this case, $\wt(u\op{\alpha}v)\coloneqq1$.
\item\label{ev:2} $\alpha\in\supp(u)$ and $u=v$. In this case, $\wt(u\op{\alpha}v)\coloneqq\sum\limits_{L}t_{\gamma}$, summing over all directed paths $L$ of the directed Dynkin diagram from $\alpha$ to some vertex $\gamma\in\supp(u)$, which is compatible with the direction of $B(u)$. Here $L$ is permitted to has length $0$.
\item\label{ev:3} $\alpha\not\in\supp(u)$, $\ell(v)=\ell(u)+1$ and $B(u)\subseteq B(v)$ where $\alpha$ is the unique vertex of $B(v)\setminus B(u)$. In this case, $\wt(u\op{\alpha}v)\coloneqq1$.
\end{enumerate}
We say $u\op{\alpha}v$ is \emph{non-equivariant} if \eqref{ev:1} or \eqref{ev:3} happens and is \emph{equivariant} if \eqref{ev:2} happens.
\end{definition}

Note that a non-equivariant boolean insertion step has weight $1$ and changes the element, whereas an equivariant boolean insertion step picks up a nontrivial weight but does not modify the boolean elements. In Section~\ref{subsec:coh}, we focus only on the non-equivariant insertions for $H^*(G/B;\mathbb{Z})$.

\begin{definition}\label{def:path-wt}
For a boolean insertion path $u^{(0)}\op{\beta_1}u^{(1)}\op{\beta_2}\cdots\op{\beta_n}u^{(n)}$, its \emph{weight} is
\[\wt(u^{(0)}\op{\beta_1}u^{(1)}\op{\beta_2}\cdots\op{\beta_n}u^{(n)})\coloneqq\prod_{j=1}^{n}\wt(u^{(j-1)}\op{\beta_j}u^{(j)}).\]
For convenience, we also write the boolean insertion path above as $u^{(0)}\op{B}u^{(n)}$ where $B=\{\beta_1, \cdots, \beta_n\}$. Note that we need to always fix an ordering $B=(\beta_1, \cdots, \beta_n)$ first, before summing over boolean insertion paths $u^{(0)}\op{B}u^{(n)}$ for a fixed set $B$.
\end{definition}
\begin{remark}
The ordering of $B$ in Definition~\ref{def:path-wt} is arbitrary. Choosing an ordering strategically may be useful (see Section~\ref{sec:algo}).
\end{remark}

\begin{ex}\label{ex:op}
The directed Dynkin diagram of type $E_7$ is shown in Figure~\ref{fig:boolean-diagram-example}. Let $u=s_{_3}s_{_5}s_{_4}s_{_7}$ and $\di{u}$ be its boolean diagram indicated by the solid vertices and directed edges shown in Figure~\ref{fig:op-start}.
\begin{figure}[h!]
\centering
\begin{dynkinDiagram}[label,label macro/.code={\alpha_{\drlap{#1}}},edge length=.75cm,mark=o]E7
\foreach\r in {3,4,5,7} {\dynkinRootMark{*}\r}
\draw[very thick, black,-latex]
            (root 4) to (root 3);
\draw[very thick, black,-latex]
            (root 4) to (root 5);
\end{dynkinDiagram}
\caption{The boolean diagram $B(u)$ for the boolean element $u=s_3s_5s_4s_7$.}
\label{fig:op-start}
\end{figure}
Then there are $5$ boolean elements $v\in W$ satisfying $u\op{\alpha_4}v$, which correspond to all the boolean terms $\xi_v$ appearing in the expansion of $\xi_u \cdot \xi_{s_{\alpha_4}}$. One of them is $u$ itself with the equivariant step and $\wt(u\op{\alpha_4}u)=t_3+t_4+t_5$. The boolean diagrams of the other $4$ are shown in Figure~\ref{fig:op-end1}.
    
    \begin{figure}[h!]
    \centering
    \begin{tabular}{cc}
    \begin{dynkinDiagram}[label,label macro/.code={\alpha_{\drlap{#1}}},edge length=.75cm,mark=o]E7
        \foreach\r in {1,3,4,5,7} {\dynkinRootMark{*}\r}
        \draw[very thick, black,-latex]
            (root 4) to (root 3);
        \draw[very thick, black,-latex]
            (root 4) to (root 5);
        \draw[very thick, black,-latex]
            (root 3) to (root 1);
    \end{dynkinDiagram}
    &\begin{dynkinDiagram}[label,label macro/.code={\alpha_{\drlap{#1}}},edge length=.75cm,mark=o]E7
        \foreach\r in {2,3,4,5,7} {\dynkinRootMark{*}\r}
        \draw[very thick, black,-latex]
            (root 4) to (root 3);
        \draw[very thick, black,-latex]
            (root 4) to (root 5);
        \draw[very thick, black,-latex]
            (root 4) to (root 2);
    \end{dynkinDiagram}\\
    \begin{dynkinDiagram}[label,label macro/.code={\alpha_{\drlap{#1}}},edge length=.75cm,mark=o]E7
        \foreach\r in {3,4,5,6,7} {\dynkinRootMark{*}\r}
        \draw[very thick, black,-latex]
            (root 4) to (root 3);
        \draw[very thick, black,-latex]
            (root 4) to (root 5);
        \draw[very thick, black,-latex]
            (root 5) to (root 6);
        \draw[very thick, black,-latex]
            (root 7) to (root 6);
    \end{dynkinDiagram}
    &\begin{dynkinDiagram}[label,label macro/.code={\alpha_{\drlap{#1}}},edge length=.75cm,mark=o]E7
        \foreach\r in {3,4,5,6,7} {\dynkinRootMark{*}\r}
        \draw[very thick, black,-latex]
            (root 4) to (root 3);
        \draw[very thick, black,-latex]
            (root 4) to (root 5);
        \draw[very thick, black,-latex]
            (root 5) to (root 6);
        \draw[very thick, black,-latex]
            (root 6) to (root 7);
    \end{dynkinDiagram}\\
    \end{tabular}
    \caption{The boolean diagrams $B(v)$ for all the boolean elements $v$ satisfying $s_3s_5s_4s_7=u\op{\alpha_4}v$ with a non-equivariant insertion step.}
    \label{fig:op-end1}
    \end{figure}
    Figure~\ref{fig:op-end1} gives an example where $\alpha\in\supp(u)$. Now choose $\alpha_6\notin\supp(u)$ and consider $u\op{\alpha_6}v$. Here, only non-equivariant insertion steps are possible. The diagrams of all of the boolean elements $v\in W$ satisfying $u\op{\alpha_6}v$ are shown in Figure~\ref{fig:op-end2}, which correspond to all the terms $\xi_v$ appearing in the expansion of $\xi_u \cdot \xi_{s_{\alpha_6}}$.
    \begin{figure}[h!]
    \centering
    \begin{tabular}{cc}
    \begin{dynkinDiagram}[label,label macro/.code={\alpha_{\drlap{#1}}},edge length=.75cm,mark=o]E7
        \foreach\r in {3,4,5,6,7} {\dynkinRootMark{*}\r}
        \draw[very thick, black,-latex]
            (root 4) to (root 3);
        \draw[very thick, black,-latex]
            (root 4) to (root 5);
        \draw[very thick, black,-latex]
            (root 6) to (root 5);
        \draw[very thick, black,-latex]
            (root 7) to (root 6);
    \end{dynkinDiagram}
    &\begin{dynkinDiagram}[label,label macro/.code={\alpha_{\drlap{#1}}},edge length=.75cm,mark=o]E7
        \foreach\r in {3,4,5,6,7} {\dynkinRootMark{*}\r}
        \draw[very thick, black,-latex]
            (root 4) to (root 3);
        \draw[very thick, black,-latex]
            (root 4) to (root 5);
        \draw[very thick, black,-latex]
            (root 6) to (root 5);
        \draw[very thick, black,-latex]
            (root 6) to (root 7);
    \end{dynkinDiagram}\\
    \begin{dynkinDiagram}[label,label macro/.code={\alpha_{\drlap{#1}}},edge length=.75cm,mark=o]E7
        \foreach\r in {3,4,5,6,7} {\dynkinRootMark{*}\r}
        \draw[very thick, black,-latex]
            (root 4) to (root 3);
        \draw[very thick, black,-latex]
            (root 4) to (root 5);
        \draw[very thick, black,-latex]
            (root 5) to (root 6);
        \draw[very thick, black,-latex]
            (root 7) to (root 6);
    \end{dynkinDiagram}
    &\begin{dynkinDiagram}[label,label macro/.code={\alpha_{\drlap{#1}}},edge length=.75cm,mark=o]E7
        \foreach\r in {3,4,5,6,7} {\dynkinRootMark{*}\r}
        \draw[very thick, black,-latex]
            (root 4) to (root 3);
        \draw[very thick, black,-latex]
            (root 4) to (root 5);
        \draw[very thick, black,-latex]
            (root 5) to (root 6);
        \draw[very thick, black,-latex]
            (root 6) to (root 7);
    \end{dynkinDiagram}\\
    \end{tabular}
    \caption{The boolean diagrams $B(v)$ for $s_3s_5s_4s_7=u\op{\alpha_6}v$.}
    \label{fig:op-end2}
    \end{figure}
\end{ex}

\begin{ex}\label{ex:wt}
Consider a Dynkin type that is not simply-laced. Let $u=s_2s_3s_4$ in $W(C_4)$ shown in Figure~\ref{fig:wt}. Since there are $2$ edges from $\alpha_4$ to $\alpha_3$ of the directed Dynkin diagram by Definition~\ref{def:dynkin}, there are $2$ directed paths from $\alpha_4$ to $\alpha_3$ in the directed Dynkin diagram which are compatible with the direction of $B(u)$. Similarly, there are $2$ directed paths from $\alpha_4$ to $\alpha_2$. It follows that $\wt(u\op{\alpha_4}u)=2t_2+2t_3+t_4$.
    \begin{figure}[h!]
    \centering
    \begin{dynkinDiagram}[label,label macro/.code={\alpha_{\drlap{#1}}},edge length=.75cm,mark=o]C4
        \foreach\r in {2,3,4} {\dynkinRootMark{*}\r}
        \draw[very thick, black,-latex]
            (root 4) to (root 3);
        \draw[very thick, black,-latex]
            (root 3) to (root 2);
    \end{dynkinDiagram}
    \caption{The boolean diagrams $B(u)$ for $u=s_2s_3s_4$}
    \label{fig:wt}
    \end{figure}
\end{ex}
\begin{definition}\label{def:mul}
    For $u\op{\alpha}v$, define its \emph{multiplicity}, denoted by $\mul(u\op{\alpha}v)$, as follows:
    \begin{enumerate}
        \item If $u\op{\alpha}v$ is equivariant as in Definition~\ref{def:op}, $\mul(u\op{\alpha}v)\coloneqq1$.
        \item If $u\op{\alpha}v$ is non-equivariant as in Definition~\ref{def:op}, let $\gamma$ be the unique vertex of $B(v)\setminus B(u)$. Then $\mul(u\op{\alpha}v)$ is the number of directed paths from $\alpha$ to $\gamma$ in the directed Dynkin diagram which are compatible with the direction of $B(v)$.
    \end{enumerate}
\end{definition}
Note that when event~\eqref{ev:2} or \eqref{ev:3} in Definition~\ref{def:op} occurs, $\mul(u\op{\alpha}v)=1$.
\begin{definition}\label{def:path-mul}
    For a boolean insertion path $u^{(0)}\op{\beta_1}u^{(1)}\op{\beta_2}\cdots\op{\beta_n}u^{(n)}$,  its \emph{multiplicity} is the product of the multiplicities of all its steps.
\end{definition}
\begin{ex}\label{ex:mul}
    Consider $u=s_2s_3s_4$ in $W(C_4)$, where $B(u)$ is shown in Figure~\ref{fig:wt}. The boolean insertion $u\op{\alpha_4}v$ gives $v=s_1s_2s_3s_4$, where $B(v)$ is shown in Figure~\ref{fig:mul}. Now $\mul(u\op{\alpha_4}v)=2$ since there are $2$ directed paths from $\alpha_4$ to $\alpha_1$.
    \begin{figure}[h!]
    \centering
    \begin{dynkinDiagram}[label,label macro/.code={\alpha_{\drlap{#1}}},edge length=.75cm,mark=o]C4
        \foreach\r in {1,2,3,4} {\dynkinRootMark{*}\r}
        \draw[very thick, black,-latex]
            (root 4) to (root 3);
        \draw[very thick, black,-latex]
            (root 3) to (root 2);
        \draw[very thick, black,-latex]
            (root 2) to (root 1);
    \end{dynkinDiagram}
    \caption{The boolean diagram $B(v)$ for $v=s_1s_2s_3s_4$.}
    \label{fig:mul}
    \end{figure}
\end{ex}
The following technical lemma is the basis of our calculations, which encodes Theorem~\ref{thm:chevalley} restricted to boolean elements.

\begin{lemma}\label{lem:monk}
    For $\alpha\in\Delta$ and a boolean element $v\in W$,
    \[\xi_v\cdot\xi_{s_\alpha}=\sum_{v\op{\alpha}w}\mul(v\op{\alpha}w)\wt(v\op{\alpha}w)\xi_w \mod \nb.\]
\end{lemma}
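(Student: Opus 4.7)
The plan is to invoke Theorem~\ref{thm:chevalley} and match each surviving term modulo $\nb$ with a boolean insertion step from Definition~\ref{def:op}. Chevalley's formula reads
\[\xi_v\cdot\xi_{s_\alpha}=(\omega_\alpha-v(\omega_\alpha))\xi_v+\sum_{\substack{w=vs_\beta\\\ell(w)=\ell(v)+1}}\langle\omega_\alpha,\beta^\vee\rangle\xi_w,\]
and we will show that the equivariant term realizes case~\eqref{ev:2}, while the boolean $w=vs_\beta$ surviving on the right realize cases~\eqref{ev:1} and \eqref{ev:3} with the predicted multiplicities and weights.

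For the equivariant term, if $\alpha\notin\supp(v)$ then every simple reflection appearing in a reduced word of $v$ fixes $\omega_\alpha$, so $v(\omega_\alpha)=\omega_\alpha$ and the term vanishes, consistent with case~\eqref{ev:2} being inapplicable. If $\alpha\in\supp(v)$ and $v=s_{i_1}\cdots s_{i_k}$ is a reduced word with $\alpha=\alpha_{i_j}$, the same fixing observation reduces the computation to $\omega_\alpha-v(\omega_\alpha)=s_{i_1}\cdots s_{i_{j-1}}(\alpha)$. Iterating Proposition~\ref{prop:game}, an induction on the number of reflections applied identifies the coefficient of each simple root $\delta$ with the number of directed paths in the Dynkin diagram from $\alpha$ to $\delta$ compatible with $B(v)$; since $t_\delta=\delta$, this equals $\wt(v\op{\alpha}v)$.

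For the cover terms, we first classify the boolean covers of $v$: they are the boolean elements $w$ with $B(v)\subseteq B(w)$ and a unique new vertex $\gamma\notin\supp(v)$, each admitting a reduced word $w=s_{i_1}\cdots s_{i_l}s_\gamma s_{i_{l+1}}\cdots s_{i_k}$ consistent with $B(w)$. A direct computation yields the positive root $\beta=s_{i_k}\cdots s_{i_{l+1}}(\gamma)$, and Weyl equivariance of the pairing gives
\[\langle\omega_\alpha,\beta^\vee\rangle=\langle s_{i_{l+1}}\cdots s_{i_k}(\omega_\alpha),\gamma^\vee\rangle.\]
When $\alpha\notin\{i_{l+1},\ldots,i_k\}$ this collapses to $\delta_{\alpha,\gamma}$, contributing $1$ precisely in case~\eqref{ev:3}. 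When $\alpha=\alpha_{i_m}$ with $m>l$, it simplifies to $-\langle s_{i_{l+1}}\cdots s_{i_{m-1}}(\alpha),\gamma^\vee\rangle$, and the same path-counting induction, combined with Definition~\ref{def:dynkin} (which identifies $-2\langle\delta,\gamma\rangle/\langle\gamma,\gamma\rangle$ with the Dynkin edge count from $\delta$ to $\gamma$), recognizes this as the number of directed Dynkin paths from $\alpha$ to $\gamma$ compatible with $B(w)$, i.e., $\mul(v\op{\alpha}w)$ in case~\eqref{ev:1}. The residual subcase $\alpha=\alpha_{i_m}$ with $m\leq l$ produces both a zero pairing and no admissible path from $\alpha$ to $\gamma$ in $B(w)$, so both sides vanish.

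The main technical obstacle is this last matching: tracking how the insertion position $l$ governs the direction of new arrows in $B(w)$, and verifying that the iteration of Proposition~\ref{prop:game} genuinely enumerates Dynkin paths with the orientation prescribed by $B(w)$ rather than some coarser ordering. Once this algebra-to-combinatorics dictionary is in place, the three cases of Definition~\ref{def:op} assemble to reproduce Chevalley's formula exactly, modulo $\nb$.
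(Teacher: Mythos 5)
Your proof is correct and follows the same overall strategy as the paper (invoke the equivariant Chevalley rule and match terms against Definition~\ref{def:op}), but the computational execution differs in a way worth noting. For the equivariant coefficient, the paper proves $\wt(v\op{\alpha}v)=\omega_\alpha-v(\omega_\alpha)$ by taking inner products with each simple root $\beta$ and running a three-way case analysis based on the auxiliary notion of the \emph{destiny} $d(\alpha)$ (Definition~\ref{def:dom}, Subcases (2.1)--(2.3)). You instead write $\omega_\alpha-v(\omega_\alpha)=s_{i_1}\cdots s_{i_{j-1}}(\alpha)$ directly and iterate Proposition~\ref{prop:game} to read off each simple-root coefficient as a count of directed Dynkin paths from $\alpha$ compatible with $B(v)$; this bypasses the destiny machinery entirely and is cleaner. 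For the cover terms, the paper expands $\beta$ starting from $\gamma$ (via $\beta=s_{\alpha_m}\cdots s_{\alpha_1}(\gamma)$) and extracts the $\alpha$-coefficient, whereas you move the Weyl action across the pairing to $\langle s_{i_{l+1}}\cdots s_{i_k}(\omega_\alpha),\gamma^\vee\rangle$ and reduce to the same path count; these are essentially mirror images of each other. The one place your writeup is thin --- and you flag it yourself --- is the ``path-counting induction'': you should state explicitly that when applying $s_{i_m}$ the replaced coefficient of $\alpha_{i_m}$ was previously zero (so the $-n_j$ term in Proposition~\ref{prop:game} drops out), that later reflections $s_{i_{m'}}$ with $m'<m$ never touch the $\alpha_{i_m}$-coefficient again, and that any compatible path from $\alpha$ to $\alpha_{i_m}$ in $B(v)$ necessarily has all intermediate vertices at word positions strictly between $m$ and $j$, so the partial product $s_{i_{l+1}}\cdots s_{i_{m-1}}(\alpha)$ in the cover case really does capture all relevant paths. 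With those details filled in, the argument is complete; what your version buys is a shorter, more uniform treatment that avoids the paper's destiny-based subcases at the cost of slightly more care in setting up the induction.
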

We give a definition that helps the proof of Lemma~\ref{lem:monk}.
\begin{definition}\label{def:dom}
    For a boolean element $w\in W$ and a vertex $\alpha\in B(w)$, the \emph{accessible subgraph} from $\alpha$ is the subgraph of the directed Dynkin diagram induced by vertices $\gamma\in B(w)$ which can be reached by a directed path of $B(w)$ from $\alpha$. Denote the accessible subgraph from $\alpha$ by $B(w,\alpha)$.
\end{definition}
For example, for $w=s_3s_2s_4s_5s_7$ in type $E_7$ shown In Figure~\ref{fig:boolean-diagram-example}, $B(w,\alpha_4)$ is the subgraph of the directed Dynkin diagram induced by $\alpha_2$, $\alpha_3$ and $\alpha_4$.
\begin{proof}[Proof of Lemma~\ref{lem:monk}]
Denote the directed Dynkin diagram by $D$ throughout the proof.

\
    
\noindent\textbf{Case (1)}: $\alpha\not\in\supp(v)$. It suffices to show that
\[\xi_v\cdot\xi_{s_{\alpha}}=\sum_{\supp(w)=\supp(v)\cup\{\alpha\}}\xi_w,\]
summing over boolean elements $w\in W$. In fact, if $\alpha_i\neq\alpha\in\Delta$, $s_i(\omega_{\alpha})=\omega_{\alpha}$ as $\langle\omega_{\alpha},\alpha_i\rangle=0$. This means $\omega_{\alpha}=v(\omega_{\alpha})$ if $\alpha\notin\supp(v)$, so equivariant term in Theorem~\ref{thm:chevalley} vanishes.

Now for any boolean element $w$ covering $v$ in the Bruhat order, there is a unique simple root $\gamma\in\supp(w)\setminus\supp(v)$. Then $\beta=u(\gamma)$ for some boolean element $u$ whose support is contained in $\supp(v)$. Since $\alpha\not\in\supp(v)$, $\alpha\not\in\supp(u)$. By Proposition~\ref{prop:game}, the coefficient of $\alpha$ in the expansion of $u(\gamma)$ is the same as $\gamma$. Thus $\langle\omega_{\alpha},\beta^{\vee}\rangle=0$ if $\gamma\neq\alpha$. If $\gamma=\alpha$, \[\langle\omega_{\alpha},\beta^{\vee}\rangle=\frac{2\langle\omega_{\alpha},\beta\rangle}{\langle\beta,\beta\rangle}=\frac{2\langle\omega_{\alpha},\gamma\rangle}{\langle\gamma,\gamma\rangle}=\frac{2\langle\omega_{\alpha},\alpha\rangle}{\langle\alpha,\alpha\rangle}=1.\]

Now assume that $\alpha\in\supp(v)$. Note that $v\op{\alpha}w$ can be either equivariant or non-equivariant as in Definition~\ref{def:op}, corresponding to different terms in Theorem~\ref{thm:chevalley}. We address these two situations as Case (2) and Case (3) respectively.

\

\noindent\textbf{Case (2)}: $\alpha\in\supp(v)$ and $\ell(v)=\ell(w)$. If $v\neq w$, then $d_{v,s_\alpha}^w = 0$ in both Theorem~\ref{thm:chevalley} and Lemma~\ref{lem:monk} which we are proving. Now assume that $v=w$, i.e. we have an equivariant boolean insertion $v\op{\alpha}w$. This means $\mul(v\op{\alpha}w)=1$. Referring to the equivariant term in Theorem~\ref{thm:chevalley}, we need to show that 
        \[\wt(v\op{\alpha}v)=\omega_{\alpha}-v(\omega_{\alpha})
        .\]
        Thus it suffices to show that for any simple root $\beta\in\Delta$,
        \begin{align}\label{eq:innerprod}
            \langle\wt(v\op{\alpha}v),\beta\rangle=\langle\omega_{\alpha}-v(\omega_{\alpha}),\beta\rangle.
        \end{align}
        The left hand side of \eqref{eq:innerprod} is
        \begin{equation}\label{eq:LHS'}
            \begin{aligned}
            \sum_{L}\langle t_{\gamma},\beta\rangle
            &=\sum_{L}\langle \omega_{\gamma}-s_{\gamma}(\omega_{\gamma}),\beta\rangle
            =\sum_L\langle\omega_{\gamma},\beta\rangle-\sum_L\langle s_{\gamma}(\omega_{\gamma}),\beta\rangle\\
            &=\sum_L\langle\omega_{\gamma},\beta\rangle-\sum_L\langle\omega_{\gamma},s_{\gamma}(\beta)\rangle
            =\sum_{L}\langle\omega_{\gamma},\beta-s_{\gamma}(\beta)\rangle\\
            &=\sum_{L}\left\langle\omega_{\gamma},\frac{2\langle\gamma,\beta\rangle}{\langle\gamma,\gamma\rangle}\gamma\right\rangle
            =\sum_L\langle\gamma,\beta\rangle.
            \end{aligned}
        \end{equation}
        summing over directed paths $L$ as in Definition~\ref{def:op}(\ref{ev:2}). The right hand side of \eqref{eq:innerprod} is
        \begin{equation}\label{eq:RHS}
            \begin{aligned}
                \langle\omega_{\alpha},\beta\rangle-\langle\omega_{\alpha},v^{-1}(\beta)\rangle.
            \end{aligned}
        \end{equation}
        
        We can calculate the coefficient of $\alpha$ in $v^{-1}(\beta)$ in \eqref{eq:RHS} by Proposition~\ref{prop:game}. Note that each $\gamma$ that shows up in \eqref{eq:LHS'} lies in $B(v,\alpha)$.
        Hence both \eqref{eq:LHS'} and \eqref{eq:RHS} are equal to $0$ if $\beta$ does not lie in the neighbourhood of $B(v,\alpha)$ in $D$ (``the neighbourhood of $B(v,\alpha)$" means all the vertices in $B(v,\alpha)$ or connected to some vertex in $B(v,\alpha)$ by some edge of $D$). Now assume that $\beta$ lies in the neighbourhood of $B(v,\alpha)$.\par
        \vspace{5pt}
        \noindent\textbf{Subcase (2.1)}: $\beta\not\in B(v,\alpha)$. There is a unique vertex $\gamma_0\in B(v,\alpha)$ adjacent to $\beta$. Suppose that there are $a$ directed paths from $\alpha$ to $\gamma_0$, $b$ directed paths from $\gamma_0$ to $\alpha$, $c$ edges from $\gamma_0$ to $\beta$, and $d$ edges from $\beta$ to $\gamma_0$ in $D$. Applying Proposition~\ref{prop:game} to \eqref{eq:LHS'}, the left hand side of \eqref{eq:innerprod} equals
        \[a\langle\gamma_0,\beta\rangle=a\cdot\left(-\frac{c}{2}\langle\beta,\beta\rangle\right)=-\frac{ac}{2}\langle\beta,\beta\rangle.  \] 
        Expression \eqref{eq:RHS} indicates that the right hand side of \eqref{eq:innerprod} equals
        \[-\langle\omega_{\alpha},v^{-1}(\beta)\rangle
                =-\langle\omega_{\alpha},bd\cdot\alpha\rangle
                =-\frac{bd}{2}\langle\alpha,\alpha\rangle=-\frac{bd}{2}\cdot\frac{ac}{bd}\langle\beta,\beta\rangle
                =-\frac{ac}{2}\langle\beta,\beta\rangle.\]

\
        
            \noindent\textbf{Subcase (2.2)}: $\beta\in B(v,\alpha)$ and $\beta\neq\alpha$. We have the following two scenarios. 
            
                If the directed path from $\alpha$ to $\beta$ in $B(w)$ cannot be extended, there is a unique vertex $\gamma_0\in B(v,\alpha)$ adjacent to $\beta$. Suppose that there are $a$ directed paths from $\alpha$ to $\gamma_0$, $b$ directed paths from $\gamma_0$ to $\alpha$, $c$ edges from $\gamma_0$ to $\beta$, and $d$ edges from $\beta$ to $\gamma_0$ in $D$.            \eqref{eq:LHS'} equals
                \[a\langle\gamma_0,\beta\rangle+ac\langle\beta,\beta\rangle=-\frac{ac}{2}\langle\beta,\beta\rangle+ac\langle\beta,\beta\rangle=\frac{ac}{2}\langle\beta,\beta\rangle.\]
                Apply Proposition~\ref{prop:game} to \eqref{eq:RHS}, then the coefficient of $\alpha$ in $v^{-1}(\beta)$ must equal to the number of directed paths in $D$ from $\beta$ to $\alpha$, and thus the right hand side of \eqref{eq:innerprod} becomes
                \[-\langle\omega_{\alpha},v^{-1}(\beta)\rangle
                        =-\langle\omega_{\alpha},-bd\cdot\alpha\rangle
                        =\frac{bd}{2}\langle\alpha,\alpha\rangle=\frac{bd}{2}\cdot\frac{ac}{bd}\langle\beta,\beta\rangle
                        =\frac{ac}{2}\langle\beta,\beta\rangle.\]

                If the directed path from $\alpha$ to $\beta$ in $B(w)$ can be extended, suppose that the out-degree of
                $\beta$ in $B(w)$ is $e$ corresponding to vertices $\gamma_j$ for $j=1,\cdots,e$. In $D$, suppose there are $a_j$ edges from $\beta$ to $\gamma_j$ and $b_j$ edges from $\gamma_j$ to $\beta$. Note that there is a unique vertex $\gamma_0\in B(v,\alpha)$ adjacent to $\beta$ pointing to $\beta$ in $B(w)$. Suppose that there are $a$ directed paths from $\alpha$ to $\gamma_0$, $b$ directed paths from $\gamma_0$ to $\alpha$, $c$ edges from $\gamma_0$ to $\beta$, and $d$ edges from $\beta$ to $\gamma_0$ in $D$.
                
                By \eqref{eq:LHS'}, the left hand side of \eqref{eq:innerprod} is
                \begin{equation*}
                    \begin{aligned}
                    \left(a\langle\beta,\gamma_0\rangle+ac\langle\beta,\beta\rangle+\sum_{j=1}^{e}aca_{j}\langle\beta,\gamma_j\rangle\right)
                    &=\left(-\frac{ac}{2}\langle\beta,\beta\rangle+ac\langle\beta,\beta\rangle-\sum_{j=1}^{e}\frac{aca_jb_j}{2}\langle\beta,\beta\rangle\right)\\
                    &=\frac{ac}{2}\left(1-\sum_{j=1}^{e}a_jb_j\right)\langle\beta,\beta\rangle.
                    \end{aligned}
                \end{equation*}
                Applying Proposition~\ref{prop:game} to \eqref{eq:RHS}, the right hand side of \eqref{eq:innerprod} becomes
                \begin{equation*}
                    \begin{aligned}
                        -\langle\omega_{\alpha},v^{-1}(\beta)\rangle
                        &=-\left\langle\omega_{\alpha},bd\left(\sum_{j=1}^{e}a_jb_j-1\right)\alpha\right\rangle
                        =-\frac{bd}{2}\left(\sum_{j=1}^{e}a_jb_j-1\right)\langle\alpha,\alpha\rangle\\
                        &=-\frac{bd}{2}\left(\sum_{j=1}^{e}a_jb_j-1\right)\cdot\frac{ac}{bd}\langle\beta,\beta\rangle
                        =\frac{ac}{2}\left(1-\sum_{j=1}^{e}a_jb_j\right)\langle\beta,\beta\rangle.
                    \end{aligned}
                \end{equation*}

\
            
            \noindent\textbf{Subcase (2.3)}: $\beta=\alpha$. 
                If the out-degree of $\alpha$ in $B(w)$ is $0$, \eqref{eq:LHS'} simplifies to
                    $\langle\alpha,\alpha\rangle$ and \eqref{eq:RHS} simplifies to $                  \langle\omega_{\alpha},\alpha\rangle-\langle\omega_{\alpha},-\alpha\rangle=\langle\alpha,\alpha\rangle$ so they are equal. 
                Say the out-degree of $\alpha$ in $B(w)$ is $e>0$ corresponding to vertices $\gamma_j$ for $j=1,\cdots,e$. In $D$, there are $a_j$ edges from $\beta$ to $\gamma_j$ and $b_j$ edges from $\gamma_j$ to $\beta$. \eqref{eq:LHS'} becomes
                \begin{equation*}
                \begin{aligned}
                    \left(\langle\alpha,\alpha\rangle+\sum_{j=1}^{e}a_j\langle\gamma_j,\alpha\rangle\right)
                    =\left(\langle\alpha,\alpha\rangle-\sum_{j=1}^{e}\frac{a_jb_j}{2}\langle\alpha,\alpha\rangle\right)
                    =\left(1-\frac{1}{2}\sum_{j=1}^{e}a_jb_j\right)\langle\alpha,\alpha\rangle.
                \end{aligned}
                \end{equation*}
                Applying Proposition~\ref{prop:game} to \eqref{eq:RHS}, we have
                \begin{equation*}
                    \begin{aligned}
                        \langle\omega_{\alpha},\alpha\rangle-\left\langle\omega_{\alpha},\left(\sum_{j=1}^{e}a_jb_j-1\right)\alpha\right\rangle
                        &=\frac{1}{2}\langle\alpha,\alpha\rangle-\frac{1}{2}\left(\sum_{j=1}^{e}a_jb_j-1\right)\langle\alpha,\alpha\rangle\\
                        &=\left(1-\frac{1}{2}\sum_{j=1}^{e}a_jb_j\right)\langle\alpha,\alpha\rangle.
                    \end{aligned}
                \end{equation*}

\
                
        \noindent\textbf{Case (3)}: $\alpha\in\supp(v)$ and $v\lessdot w$ in the Bruhat order. Compare the coefficient $d_{v,s_\alpha}^w$ of the equivariant Schubert class $\xi_w$ indexed by a boolean element in Theorem~\ref{thm:chevalley} and Lemma~\ref{lem:monk} which we are proving. Theorem~\ref{thm:chevalley} indicates that $d_{v,s_\alpha}^w = \langle\omega_\alpha,\beta^\vee\rangle$ if $v\lessdot w$ in the Bruhat order and $w=vs_\beta$ for some positive root $\beta\in\Phi^+$, while Lemma~\ref{lem:monk} states that $$d_{v,s_\alpha}^w=\begin{cases}
            \mul(v\op{\alpha}w)\wt(v\op{\alpha}w) = \mul(v\op{\alpha}w), &\text{if we have a non-equivariant $v\op{\alpha}w$} \\
            0, &\text{if $v\lessdot w$ but $v\op{\alpha}w$ does not hold}.
        \end{cases}$$ Consequently, we need to establish the following two facts for boolean elements $v,w$:
        \begin{enumerate}
            \item [Fact 1.] Let $v,w$ be boolean elements such that $v\lessdot w$ in the Bruhat order but $w$ does not satisfy $v\op{\alpha}w$. Write $w=vs_{\beta}$ for some positive root $\beta\in\Phi^+$. Then $\langle\omega_{\alpha},\beta^{\vee}\rangle=0$.
            \item [Fact 2.] Let $v,w$ be boolean elements such that $v\op{\alpha}w$ and $w=vs_{\beta}$ for some positive root $\beta\in\Phi^+$. Then $\langle\omega_{\alpha},\beta^{\vee}\rangle=\mul(v\op{\alpha}w)$.
        \end{enumerate}      
        Let $\gamma$ be the unique vertex in $B(w)\setminus B(v)$.
        
        For Fact 1, there are no directed paths from $\alpha$ to $\gamma$ in $B(w)$ by Definition~\ref{def:op}. Either $\alpha$ and $\gamma$ lie in distinct connected components of $B(w)$ (i.e. there are no undirected paths in $B(w)$ connecting $\alpha$ and $\gamma$), or there exists one edge between $\alpha$ and $\gamma$ pointing towards $\alpha$ in $B(w)$. Both cases indicate that there exists a reduced word of $w$ where $s_{\gamma}$ appears after $s_{\alpha}$. Let this reduced word be $as_{\gamma}s_{\alpha_1}\cdots s_{\alpha_m}$ where $\alpha_j\neq\alpha$ for $j\in[m]$ and $a$ is a subword. Then $as_{\alpha_1}\cdots s_{\alpha_m}$ is a reduced word of $v$. Thus $w=vs_{\alpha_m}\cdots s_{\alpha_1}s_{\gamma}s_{\alpha_1}\cdots s_{\alpha_m}$, $s_{\beta}=s_{\alpha_m}\cdots s_{\alpha_1}s_{\gamma}s_{\alpha_1}\cdots s_{\alpha_m}$ and  $\beta=s_{\alpha_m}\cdots s_{\alpha_1}(\gamma)$. By Proposition~\ref{prop:game}, the coefficient of $\alpha$ in $\beta$ is $0$ since $\alpha_j\neq\alpha$ for $j\in[m]$. We have $\langle\omega_{\alpha},\beta^{\vee}\rangle=0$ and Fact 1 holds.
        
        For Fact 2, we have a directed path $L: \alpha=\alpha_0,\alpha_1,\cdots, \alpha_m=\gamma$ from $\alpha$ to $\gamma$ in $B(w)$. Then one reduced word of $w$ is of the form
        $as_{\gamma}s_{\alpha_{m-1}}\cdots s_{\alpha_1}s_{\alpha}b$ where both $a$ and $b$ are subwords. Consequently, $as_{\alpha_{m-1}}\cdots s_{\alpha_1}s_{\alpha}b$ is a reduced word of $v$. Therefore, \[w=vb^{-1}s_{\alpha}s_{\alpha_1}\cdots s_{\alpha_{m-1}}s_{\gamma}s_{\alpha_{m-1}}\cdots s_{\alpha_1}s_{\alpha}b\] and we deduce that $\beta=b^{-1}s_{\alpha}s_{\alpha_1}\cdots s_{\alpha_{m-1}}(\gamma)$. Apply Proposition~\ref{prop:game} and since all simple reflections that appear in $b$ do not contribute to the coefficient of $\alpha$ in $\beta$, we know that the coefficient of $\alpha$ in $\beta$ equals $t$, the number of directed paths in $D$ from $\gamma$ to $\alpha$. Let $r$ be the number of directed paths of $D$ from $\alpha$ to $\gamma$. Then we have
        \[\langle\omega_{\alpha},\beta^{\vee}\rangle=\frac{2\langle\omega_{\alpha},\beta\rangle}{\langle\beta,\beta\rangle}=\frac{2\langle\omega_{\alpha},\beta\rangle}{\langle\gamma,\gamma\rangle}=\frac{2\langle\omega_{\alpha},t\cdot\alpha\rangle}{\langle\gamma,\gamma\rangle}=\frac{t\langle\alpha,\alpha\rangle}{\langle\gamma,\gamma\rangle}=\frac{t\cdot r}{t}=r.\]
        By Definition~\ref{def:mul}, $r=\mul(v\op{\alpha}w)$ so Fact 2 holds.
\end{proof}

\subsection{Multiplying Schubert classes indexed by boolean elements}\label{subsec:mult}
Recall that once we fix an ordering $B=(\beta_1, \cdots, \beta_n)$ where $\beta_1,\cdots,\beta_n\in\Delta$, a boolean insertion path $u=u^{(0)}\op{\beta_1}u^{(1)}\op{\beta_2}\cdots\op{\beta_n}u^{(n)}=w$ can be written as $u\op{B}w$. The following is a direct corollary of Lemma~\ref{lem:monk}, which is obtained from applying Lemma~\ref{lem:monk} on $\beta_1,\ldots,\beta_n$ step by step. The ordering of $B$ is arbitrary because of the commutative multiplication $\xi_{s_{\beta_i}}\cdot\xi_{s_{\beta_j}} = \xi_{s_{\beta_j}}\cdot\xi_{s_{\beta_i}}$.
\begin{corollary}\label{cor:monk}
    For boolean element $u\in W$ and a set of simple roots $B\subseteq\Delta$, fix an ordering $B=(\beta_1, \cdots, \beta_n)$ of $B$, then
    \[\xi_u\prod_{\beta\in B}\xi_{s_{\beta}}=\sum_{u\op{B}w}\mul(u\op{B}w)\wt(u\op{B}w)\xi_{w} \mod \nb\]
    summing over all boolean insertion paths $u=u^{(0)}\op{\beta_1}u^{(1)}\op{\beta_2}\cdots\op{\beta_n}u^{(n)}=w$.
\end{corollary}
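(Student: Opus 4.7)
The proof is by induction on $n = |B|$, with the inductive engine being Lemma~\ref{lem:monk} applied one factor at a time.

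For the base case $n=0$, both sides reduce to $\xi_u$: the left-hand side is an empty product and the right-hand side sums over the unique length-zero boolean insertion path at $u$, whose multiplicity and weight are both $1$.

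For the inductive step, set $B' = \{\beta_1, \ldots, \beta_{n-1}\}$ and invoke the induction hypothesis to write
\[
\xi_u \prod_{i=1}^{n-1}\xi_{s_{\beta_i}} \;=\; \sum_{u\op{B'}v} \mul(u\op{B'}v)\,\wt(u\op{B'}v)\,\xi_v \pmod{\nb},
\]
where the sum is over boolean insertion paths terminating at some boolean $v$. Multiply both sides by $\xi_{s_{\beta_n}}$. Because $\nb$ is an ideal of $H^*_T(G/B;\mathbb{Z})$ (noted immediately after Definition~\ref{def:boolean-diagram}), this congruence is preserved. Each resulting $v$ on the right is boolean, so Lemma~\ref{lem:monk} gives
\[
\xi_v\cdot\xi_{s_{\beta_n}} \;=\; \sum_{v\op{\beta_n}w}\mul(v\op{\beta_n}w)\,\wt(v\op{\beta_n}w)\,\xi_w \pmod{\nb}.
\]
Substituting and again using that $\nb$ is an ideal (so that the $\mathbb{Z}[\Lambda]$-coefficients preserve the congruence) yields a sum over pairs consisting of a path $u\op{B'}v$ together with a final step $v\op{\beta_n}w$, which is precisely the data of a boolean insertion path $u\op{B}w$ under the fixed ordering $B=\{\beta_1,\ldots,\beta_n\}$. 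By Definitions~\ref{def:path-wt} and \ref{def:path-mul}, multiplicities and weights of paths multiply across concatenation, so the coefficient of $\xi_w$ agrees with $\mul(u\op{B}w)\,\wt(u\op{B}w)$ summed over all such paths.

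The only step requiring any care is the bookkeeping at the final concatenation: I would explicitly verify that every boolean insertion path $u\op{B}w$ arises exactly once as a pair $(u\op{B'}v,\; v\op{\beta_n}w)$, which is immediate from the definition since the ordering on $B$ is fixed in advance. There is no genuine obstacle beyond this formal check, because the entire combinatorial content --- matching coefficients with multiplicities of Dynkin paths and weights --- has already been absorbed into Lemma~\ref{lem:monk}, and the ideal property of $\nb$ guarantees that non-boolean debris accumulated during the iterated multiplication remains safely inside $\nb$ and never contaminates the boolean coefficients.
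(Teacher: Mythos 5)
Your proof is correct and is essentially the paper's argument made explicit: the paper simply states that the corollary follows from applying Lemma~\ref{lem:monk} to $\beta_1,\ldots,\beta_n$ step by step, and your induction together with the observation that $\nb$ is an ideal and that weights and multiplicities multiply across concatenation is precisely the formalization of that step-by-step application.
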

For convenience, for $f\in H_T^*(G/B;\mathbb{Z})$, we write $[\xi_w]f$ for the coefficient of $\xi_w$ in $f$ expanded in the basis of the equivariant Schubert classes.

The following result is the last technical lemma for Theorem~\ref{thm:main}.
\begin{lemma}\label{lem:sym}
    For boolean elements $u,v,w\in W$ satisfying $u\op{\supp(v)}w$ and $v\op{\supp(u)}w$,
    \[[\xi_w](\xi_u\cdot\xi_v)=[\xi_w]\Big(\xi_u\prod_{\beta\in\supp(v)}\xi_{s_{\beta}}\Big)=[\xi_w]\Big(\Big(\prod_{\alpha\in\supp(u)}\xi_{s_{\alpha}}\Big)\xi_v\Big)=[\xi_w]\Big(\Big(\prod_{\alpha\in\supp(u)}\xi_{s_{\alpha}}\Big)\Big(\prod_{\beta\in\supp(v)}\xi_{s_{\beta}}\Big)\Big).\]
\end{lemma}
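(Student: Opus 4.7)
The plan is to expand $\prod_{\beta \in \supp(v)} \xi_{s_\beta}$ modulo $\nb$ via Corollary~\ref{cor:monk} applied at the identity, and then exploit a Bruhat-order uniqueness fact for boolean elements. First I would apply Corollary~\ref{cor:monk} with base element $\mathrm{id}$ and label set $\supp(v)$ in any fixed ordering. Since $\supp(\mathrm{id}) = \emptyset$ and the labels are pairwise distinct, at every step of any boolean insertion path starting from $\mathrm{id}$, the current label $\beta_i$ lies outside the current support, so only the third case of Definition~\ref{def:op} (event~\eqref{ev:3}) can apply. Each such step contributes weight $1$ and multiplicity $1$, and for each boolean $v'$ with $\supp(v') = \supp(v)$ there is exactly one boolean insertion path $\mathrm{id}\op{\supp(v)}v'$, obtained by making at each step the unique edge-direction choices dictated by $B(v')$. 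Therefore
\[
\prod_{\beta \in \supp(v)} \xi_{s_\beta} \equiv \sum_{v' \,:\, \supp(v') = \supp(v)} \xi_{v'} \pmod{\nb}.
\]

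Multiplying by $\xi_u$ and using that $\nb$ is an ideal (so the non-boolean remainder contributes nothing to the coefficient of the boolean class $\xi_w$) yields
\[
[\xi_w]\Big(\xi_u \prod_{\beta \in \supp(v)} \xi_{s_\beta}\Big) = \sum_{v' \,:\, \supp(v') = \supp(v)} d_{uv'}^w.
\]
The key step is to show $d_{uv'}^w = 0$ for every $v' \neq v$ with $\supp(v') = \supp(v)$. The hypothesis $v\op{\supp(u)}w$ together with the fact that every boolean insertion step is either the identity (event~\eqref{ev:2}) or a Bruhat cover (events~\eqref{ev:1} and \eqref{ev:3}) forces $v \le w$ in Bruhat order. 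For boolean elements, the subword property characterizes Bruhat order as $v \le w$ iff $B(v) = B(w)|_{\supp(v)}$, so $v$ is the \emph{unique} boolean element with support $\supp(v)$ lying below $w$. Consequently any $v' \neq v$ with the same support has $v' \not\le w$, and $d_{uv'}^w = 0$ by the standard fact that equivariant Schubert structure constants $d_{xy}^z$ vanish unless $x, y \le z$. This proves the first equality $[\xi_w](\xi_u \prod_\beta \xi_{s_\beta}) = d_{uv}^w = [\xi_w](\xi_u \xi_v)$.

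The symmetric equality with $[\xi_w]\bigl((\prod_\alpha \xi_{s_\alpha}) \xi_v\bigr)$ follows by the analogous argument with $u$ and $v$ exchanged, using the hypothesis $u\op{\supp(v)}w$ to guarantee $u \le w$ and the corresponding uniqueness of $u$ among boolean elements with support $\supp(u)$. The final equality with $[\xi_w]\bigl((\prod_\alpha \xi_{s_\alpha})(\prod_\beta \xi_{s_\beta})\bigr)$ is then obtained by expanding both products and observing that every cross-term $d_{u'v'}^w$ with $u' \neq u$ or $v' \neq v$ vanishes by the same uniqueness applied to one of the two indices. The main combinatorial input is thus the uniqueness of a boolean element of prescribed support lying below a given boolean $w$, an immediate consequence of the subword characterization of the Bruhat order; there is no serious obstacle beyond this observation.
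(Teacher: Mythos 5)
Your proof is correct and follows essentially the same strategy as the paper's: expand the product of simple classes via the boolean Monk lemma as $\sum_{v'}\xi_{v'}$ over boolean $v'$ of prescribed support, then use that a boolean $w$ admits at most one boolean element of a given support below it, since $\di{w}$ induces a unique directed subgraph on any subset of its vertices. The paper packages this as a disjointness claim (no boolean $\xi_{w'}$ appears in two distinct products $\xi_{u'}\xi_{v'}$), while you invoke the vanishing $d_{uv'}^w=0$ for $v'\not\le w$ directly; this is a cosmetic difference, not a different argument.
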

\begin{proof}
    By Lemma~\ref{lem:monk}, specifically Case (1), we have that
    \[\prod_{\alpha\in\supp(u)}\xi_{s_{\alpha}}=\sum_{\supp(u^{\prime})=\supp(u)}\xi_{u^{\prime}},\qquad\prod_{\beta\in\supp(v)}\xi_{s_{\beta}}=\sum_{\supp(v^{\prime})=\supp(v)}\xi_{v^{\prime}}\] summing over boolean elements $u^{\prime}$ and $v^{\prime}$. Thus, 
    \begin{align}\label{eq:vexp}\xi_u\prod_{\beta\in\supp(v)}\xi_{s_{\beta}}=\sum_{\supp(v^{\prime})=\supp(v)}\xi_u\xi_{v^{\prime}}    
    \end{align} where $v^{\prime}$ is boolean,
    \begin{align}\label{eq:uexp}\Big(\prod_{\alpha\in\supp(u)}\xi_{s_{\alpha}}\Big)\xi_v=\sum_{\supp(u^{\prime})=\supp(u)}\xi_{u^{\prime}}\xi_v
    \end{align} where $u^{\prime}$ is boolean, 
    and
    \begin{align}\label{eq:exp}
    \Big(\prod_{\alpha\in\supp(u)}\xi_{s_{\alpha}}\Big)\Big(\prod_{\beta\in\supp(v)}\xi_{s_{\beta}}\Big)=\sum_{u^{\prime},v^{\prime}}\xi_{u^{\prime}}\xi_{v^{\prime}}
    \end{align}
    summing over boolean elements $u^{\prime},v^{\prime}\in W$ satisfying $\supp(u^{\prime})=\supp(u)$ and $\supp(v^{\prime})=\supp(v)$.
    
    We claim that for distinct pairs of boolean elements $(u^{\prime},v^{\prime})\neq(u^{\prime\prime},v^{\prime\prime})$ with $\supp(u^{\prime})=\supp(u^{\prime\prime})$ and $\supp(v^{\prime})=\supp(v^{\prime\prime})$, there does not exist a boolean element $w'$ such that $\xi_{w'}$ appears in both $\xi_{u^{\prime}}\xi_{v^{\prime}}$ and $\xi_{u^{\prime\prime}}\xi_{v^{\prime\prime}}$. Assume for the sake of contradiction that $\xi_{w^{\prime}}$ appears in both, and assume $u^{\prime}\neq u^{\prime\prime}$ without lose of generality. Then $u^{\prime}\leq w^{\prime}$ and $B(u^{\prime})\subseteq B(w^{\prime})$. Similarly, $B(u^{\prime\prime})\subseteq B(w^{\prime})$. But this indicates that $B(w^{\prime})$ contains different subgraphs induced by the same vertex set $\supp(u^\prime) = \supp(u^{\prime\prime})$, a contradiction.

    Now let a boolean element $w$ be as in the lemma statement.
    By $u\op{\supp(v)}w$ and Corollary~\ref{cor:monk}, $\xi_w$ appears in the expansion of some term on the right hand side of \eqref{eq:vexp}. Similarly, $\xi_w$ appears in the expansion of some term on the right hand side of \eqref{eq:uexp}. At the same time, the claim above indicates that the common terms of the right hand sides of \eqref{eq:vexp} and \eqref{eq:uexp} all come from $\xi_u\xi_v$. As a result, $\xi_w$ appears in the expansion of $\xi_u\xi_v$. Moreover, its coefficients in \eqref{eq:vexp}, \eqref{eq:uexp} and \eqref{eq:exp} are all equal to $[\xi_w](\xi_u\xi_v)$.
\end{proof}
\begin{remark}
Lemma~\ref{lem:sym} demonstrates a very unique property of boolean elements. Let $u\in W$ be any element and let $\mathbf{u}$ be a reduced word of $u$. We know that $\prod_{\alpha\in\mathbf{u}}\xi_{s_{\alpha}}$ contains $\xi_u$ and a lot of other terms. In general, we expect \[[\xi_w](\xi_u\cdot\xi_v)<[\xi_w]\Big(\Big(\prod_{\alpha\in\mathbf{u}}\xi_{s_{\alpha}}\Big)\Big(\prod_{\beta\in\mathbf{v}}\xi_{s_{\beta}}\Big)\Big).\]
However, Lemma~\ref{lem:sym} tells us that when $w$ is boolean, which implies that the relevant $u$ and $v$ are also boolean, we have an equality so that the structure constants are manageable. 
\end{remark}

We are now ready to prove Theorem~\ref{thm:main}.
\begin{proof}[Proof of Theorem~\ref{thm:main}]
    By Lemma~\ref{lem:sym}, we have 
    \[d_{uv}^w=[\xi_w](\xi_u\cdot\xi_v)=[\xi_w]\Big(\xi_u\prod_{\beta\in\supp(v)}\xi_{s_{\beta}}\Big).\]
    We are done by applying Corollary~\ref{cor:monk} to the right hand side.

    Note that $u$ and $v$ play a symmetric role in Lemma~\ref{lem:sym}. We can interchange $u$ and $v$ in the theorem statement.
\end{proof}

\subsection{Structure constants in the cohomology ring $H^*(G/B;\mathbb{Z})$}\label{subsec:coh}
In this section, we only need to consider non-equivariant boolean insertions with weights equal to $1$.
\begin{proof}[Proof of Corollary~\ref{cor:main}]
The cohomology version can be derived from the equivariant cohomology version by setting $t_{\alpha}=0$ in Theorem~\ref{thm:main} for each simple root $\alpha\in\Delta$. This is equivalent to requiring each boolean insertion to be non-equivariant with weight $1$.
\end{proof}

The following result is an interesting property of boolean insertions.
\begin{proposition}\label{prop:insert}
In the case where the directed Dynkin diagram is a path, fix an ordering $B=(\beta_1, \cdots, \beta_n)$ of a set of simple roots $B\subseteq\Delta$, then there exists at most one non-equivariant boolean insertion path $u\op{B}w$ for any boolean elements $u,w\in W$.
\end{proposition}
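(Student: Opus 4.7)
The plan is to induct on $n=|B|$, with the base case $n=0$ being immediate. For the inductive step, it suffices to show that the first step of any non-equivariant boolean insertion path $u\op{B}w$ is uniquely determined; applying the induction hypothesis to $(u^{(1)},w,(\beta_2,\ldots,\beta_n))$ will then finish the proof.

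If $\beta_1\notin\supp(u)$, only Definition~\ref{def:op}(\ref{ev:3}) applies, so $\gamma_1=\beta_1$, and the orientations of the new edges incident to $\beta_1$ in $B(u^{(1)})$ are forced by the containment $B(u^{(1)})\subseteq B(w)$; hence $u^{(1)}$ is unique. If $\beta_1\in\supp(u)$, only Definition~\ref{def:op}(\ref{ev:1}) applies. Because the Dynkin diagram is a path, every connected component of $B(u)$ is an interval; let $[a,b]$ be the one containing $\beta_1$. A directed path in the path graph $B(u^{(1)})$ from $\beta_1$ to the unique new vertex $\gamma_1$ is a linear segment of the Dynkin path, so $\gamma_1\in\{a-1,b+1\}$: at most two candidates. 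The right candidate $\gamma_1=b+1$ is locally valid exactly when $b+1\in\supp(w)$ and the chain of arrows $\beta_1\to(\beta_1+1)\to\cdots\to b\to(b+1)$ lies in $B(w)$; the left candidate is symmetric.

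The crux of the argument is to show that the global target $w$, together with the ordering of $B$, rules out one of the two locally valid candidates. Suppose for contradiction that both extend to valid paths $\mathcal P$ (with $\gamma_1=b+1$) and $\mathcal Q$ (with $\gamma_1=a-1$). Joint local validity forces the arrows of $B(w)$ on $[a-1,b+1]$ to point away from $\beta_1$, i.e.\ $(a-1)\leftarrow a\leftarrow\cdots\leftarrow\beta_1\rightarrow\cdots\rightarrow b\rightarrow(b+1)$; in particular no arrow in this range points toward $\beta_1$. I then compare $\mathcal P$ and $\mathcal Q$ step by step from step $2$ onward via a case analysis on $\beta_j$ and the current components of $B(u^{(j-1)}_{\mathcal P})$ and $B(u^{(j-1)}_{\mathcal Q})$. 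At each step involving a simple root in or adjacent to the growing analogue of $[a,b]$, the asymmetric ``away-from-$\beta_1$'' orientation forbids certain directed paths: for example, once $\beta_j=a-1$ is processed, $\mathcal P$ uses Definition~\ref{def:op}(\ref{ev:3}) to add $a-1$, but in $\mathcal Q$ the vertex $a-1$ already lies in the support, so Definition~\ref{def:op}(\ref{ev:1}) applies, and since no arrow of $B(w)$ goes rightward out of $a-1$, $\mathcal Q$ is forced to extend strictly further left (adding $a-2$, or else failing on the spot). Iterating this ``lockstep'' shift, $\mathcal Q$ is forced to add successively $a-2,a-3,\ldots$ at the very steps where $\mathcal P$ adds $a-1,a-2,\ldots$, and because $\supp(w)$ is finite, $\mathcal Q$ must eventually attempt to add a vertex outside $\supp(w)$, contradicting its assumed validity.

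The main obstacle is formalising this step-by-step comparison: one must carefully enumerate how each subsequent $\beta_j$ interacts with the distinct supports and components of $\mathcal P$ and $\mathcal Q$, and verify that the ``away-from-$\beta_1$'' edge orientation on $[a-1,b+1]$ in $B(w)$ keeps propagating the lockstep shift until one path stalls. Once uniqueness of $u^{(1)}$ is established, the induction hypothesis on the $(n-1)$-step tail completes the proof.
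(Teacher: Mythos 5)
Your overall strategy — induct on $|B|$ by showing the first insertion step is forced, then apply the inductive hypothesis to $(u^{(1)}, w, (\beta_2,\ldots,\beta_n))$ — is a sound reformulation of the paper's ``first point of disagreement'' argument, and your handling of the case $\beta_1\notin\supp(u)$ is correct. You also correctly identify the two candidates $\gamma_1\in\{a-1,b+1\}$ in the case $\beta_1\in\supp(u)$ and deduce the ``away-from-$\beta_1$'' orientation of $B(w)$ on $[a-1,b+1]$.

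The gap is in the lockstep argument that is supposed to rule out the coexistence of $\mathcal P$ and $\mathcal Q$, and it is a genuine one rather than a formalization chore. As stated, your mechanism requires that $\mathcal P$ adds $a-1$ precisely at the step $\beta_j=a-1$, and that $\mathcal Q$ responds at that same step by adding exactly $a-2$. Neither of these is guaranteed: $a-1$ need not even belong to $B$ (it is a vertex of $\supp(w)\setminus\supp(u)$, but the \emph{trigger} $\beta_j$ that makes $\mathcal P$ add $a-1$ can be $a$, $a+1$, etc.\ via Definition~\ref{def:op}\eqref{ev:1}), and the vertex $\mathcal Q$ adds at that step depends on the shape of $\mathcal Q$'s current component, not on a fixed offset of one. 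Tracking identities of specific vertices therefore does not propagate cleanly. The paper's proof of Proposition~\ref{prop:insert} (Subcase~2.2) replaces this by a single numerical invariant: the number of vertices of the current boolean diagram strictly on the $\gamma$-side of $\beta_1$. The blocking edge $\beta_1\to(\gamma\text{-side})$, present in $B(u^{(i_0)})$ but paired with the opposite blocking edge in $B(v^{(i_0)})$, guarantees that at every step the $\gamma$-side count for the first path stays strictly greater than for the second; since both must terminate at $B(w)$, this is a contradiction. Replacing your vertex-by-vertex lockstep by this count invariant closes the gap while keeping the rest of your structure intact.
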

\begin{proof}
Assume for the sake of contradiction that there are two distinct non-equivariant boolean insertion paths : $u=u^{(0)}\op{\beta_1}u^{(1)}\op{\beta_2}\cdots\op{\beta_n}u^{(n)}=w$ and $u=v^{(0)}\op{\beta_1}v^{(1)}\op{\beta_2}\cdots\op{\beta_n}v^{(n)}=w$. Choose the smallest $i_0$ such that $u^{(i_0)}\neq v^{(i_0)}$. Then $u^{(i_0-1)}=v^{(i_0-1)}$.

\
    
    \noindent\textbf{Case (1)}: $\beta_{i_0}\not\in\supp(u^{(i_0-1)})$. We have $\supp(u^{(i_0)})=\supp(v^{(i_0)})$ by Definition~\ref{def:op} but $B(u^{(i_0)})\neq B(v^{(i_0)})$. Hence $B(u^{(n)})$ and $B(v^{(n)})$ have distinct subgraphs induced by the same vertex set $\supp(u^{(i_0)})=\supp(v^{(i_0)})$, indicating that $B(u^{(n)})\neq B(v^{(n)})$, which is a contradiction.

\
    
    \noindent\textbf{Case (2)}: $\beta_{i_0}\in\supp(u^{(i_0-1)})$. Let $\gamma$ be the new vertex added in the insertion step $u^{(i_0-1)}\op{\beta_{i_0}}u^{(i_0)}$ and $\alpha_1$ be the new vertex added in the insertion step $v^{(i_0-1)}\op{\beta_{i_0}}v^{(i_0)}$.

\
    
    \noindent\textbf{Subcase (2.1)}: $\gamma=\alpha_1$. Then $u^{(i_0)}\neq v^{(i_0)}$ indicates that both $B(u^{(i_0)})$ and $B(v^{(i_0)})$ have a directed path from $\beta_{i_0}$ to $\gamma$ but the directions of next edges are opposite to each other by Definition~\ref{def:op} (Figure~\ref{fig:add-same-pt}). Hence $B(u^{(n)})$ and $B(v^{(n)})$ have distinct subgraphs induced by the same vertex set $\supp(u^{(i_0)})=\supp(v^{(i_0)})$, indicating that $B(u^{(n)})\neq B(v^{(n)})$, which is a contradiction.
    \begin{figure}[H]
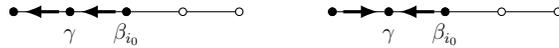

    \centering
    \begin{dynkinDiagram}[labels={,\gamma,\beta_{i_0},,},edge length=.75cm,mark=o]A5
        \foreach\r in {1,2,3} {\dynkinRootMark{*}\r}
        \draw[very thick, black,-latex]
            (root 3) to (root 2);
        \draw[very thick, black,-latex]
            (root 2) to (root 1);
    \end{dynkinDiagram}
    \qquad
    \begin{dynkinDiagram}[labels={,\gamma,\beta_{i_0},,},edge length=.75cm,mark=o]A5
        \foreach\r in {1,2,3} {\dynkinRootMark{*}\r}
        \draw[very thick, black,-latex]
            (root 3) to (root 2);
        \draw[very thick, black,-latex]
            (root 1) to (root 2);
    \end{dynkinDiagram}
    \caption{The boolean diagrams of $u^{(i_0)}$ and $v^{(i_0)}$.}
    \label{fig:add-same-pt}
    \end{figure}
    
    \
    
    \noindent\textbf{Subcase (2.2)}: $\gamma\neq\alpha_1$. In the directed Dynkin diagram, $B(u^{(i_0)})$ has more vertices than $B(v^{(i_0)})$ on the $\gamma$-side of $\beta_{i_0}$ (Figure~\ref{fig:add-dis-pt}). We show that $B(u^{(i)})$ has more vertices than $B(v^{(i)})$ on the $\gamma$-side of $\beta_{i_0}$ for $i\ge i_0$ by induction. If $\beta_{i+1}$ lies on the $\gamma$-side of $\beta_{i_0}$, then $u^{(i)}\op{\beta_{i+1}}u^{(i+1)}$ adds one vertex to the $\gamma$-side of $\beta_{i_0}$ since there exists one edge from $\beta_{i_0}$ to $\gamma$ in $B(u^{(i_0)})\subseteq B(u^{(i)})$. Note that $v^{(i)}\op{\beta_{i+1}}v^{(i+1)}$ adds at most one vertex to the $\gamma$-side of $\beta_{i_0}$. Hence $B(u^{(i+1)})$ has more vertices than $B(v^{(i+1)})$ on the $\gamma$-side of $\beta_{i_0}$. If $\beta_{i+1}$ lies on the $\alpha_1$-side of $\beta_{i_0}$, then $u^{(i)}\op{\beta_{i+1}}u^{(i+1)}$ does not decrease the number of vertices on the $\gamma$-side of $\beta_{i_0}$. Note that $v^{(i)}\op{\beta_{i+1}}v^{(i+1)}$ adds no vertices to the $\gamma$-side of $\beta_{i_0}$ since there exists one edge from $\beta_{i_0}$ to $\alpha_1$ in $B(v^{(i_0)})\subseteq B(v^{(i)})$. Hence $B(u^{(i+1)})$ has more vertices than $B(v^{(i+1)})$ on the $\gamma$-side of $\beta_{i_0}$. The induction step goes through. In particular, $B(u^{(n)})$ has more vertices than $B(v^{(n)})$ on the $\gamma$-side of $\beta_{i_0}$, which indicates that $u^{(n)}\neq v^{(n)}$. This is a contradiction.
\end{proof}
    \begin{figure}[H]
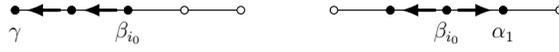

    \centering
    \begin{dynkinDiagram}[labels={\gamma,,\beta_{i_0},,},edge length=.75cm,mark=o]A5
        \foreach\r in {1,2,3} {\dynkinRootMark{*}\r}
        \draw[very thick, black,-latex]
            (root 3) to (root 2);
        \draw[very thick, black,-latex]
            (root 2) to (root 1);
    \end{dynkinDiagram}
    \qquad
    \begin{dynkinDiagram}[labels={,,\beta_{i_0},\alpha_1,},edge length=.75cm,mark=o]A5
        \foreach\r in {2,3,4} {\dynkinRootMark{*}\r}
        \draw[very thick, black,-latex]
            (root 3) to (root 2);
        \draw[very thick, black,-latex]
            (root 3) to (root 4);
    \end{dynkinDiagram}
    \caption{The boolean diagrams of $u^{(i_0)}$ and $v^{(i_0)}$.}
    \label{fig:add-dis-pt}
    \end{figure}
In type $A$, all the multiplicities as in Definition~\ref{def:mul} are $1$. Combining Corollary~\ref{cor:main} and Proposition~\ref{prop:insert}, we arrive at the following result.
\begin{corollary}\label{cor:0-1}
For boolean elements $u,v,w$ in the Weyl group of type $A$, $c_{uv}^w=1$ if there exist non-equivariant boolean insertion paths $u\op{\supp(v)}w$ and $v\op{\supp(u)}w$; $c_{uv}^{w}=0$ otherwise.
\end{corollary}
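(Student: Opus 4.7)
The plan is to derive the corollary as a direct consequence of Corollary~\ref{cor:main} and Proposition~\ref{prop:insert}. First I would note that the Dynkin diagram of type $A_n$ is the simple path $\alpha_1 - \alpha_2 - \cdots - \alpha_n$, so the hypothesis of Proposition~\ref{prop:insert} is satisfied. Consequently, once an ordering of $\supp(v)$ is fixed, at most one non-equivariant boolean insertion path $u\op{\supp(v)}w$ exists.

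Next I would verify that every multiplicity in type $A$ equals $1$. For a non-equivariant insertion step $u'\op{\alpha}v'$, Definition~\ref{def:mul} counts directed paths in the Dynkin diagram from $\alpha$ to the new vertex $\gamma\in B(v')\setminus B(u')$ compatible with the orientation of $B(v')$. Since the Dynkin diagram of $A_n$ is a path with all single edges, there is a unique undirected path between any two vertices, and hence at most one compatible directed path; thus each individual step multiplicity is $0$ or $1$, and is $1$ precisely when the step is defined. By Definition~\ref{def:path-mul}, the total multiplicity $\mul(u\op{\supp(v)}w)$ of any boolean insertion path is therefore $1$ as well.

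Substituting these two facts into Corollary~\ref{cor:main}, whenever the conditional clause of that corollary is met, the sum collapses to a single term equal to $1$, giving $c_{uv}^w=1$; otherwise $c_{uv}^w=0$. The only remaining subtlety is that the corollary as stated here requires the existence of \emph{both} $u\op{\supp(v)}w$ and $v\op{\supp(u)}w$, whereas Corollary~\ref{cor:main} only imposes the existence of $v\op{\supp(u)}w$ (with the sum ranging over $u\op{\supp(v)}w$). This is resolved by the symmetry observation noted after Theorem~\ref{thm:main}, which swaps the roles of $u$ and $v$ and shows that $c_{uv}^w$ is nonzero if and only if both paths exist.

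I do not anticipate a genuine obstacle: the heavy lifting has already been carried out in the preceding results, and the argument is essentially the combinatorial observation that simply-laced Dynkin diagrams which are paths force every step multiplicity to be $1$, combined with the uniqueness provided by Proposition~\ref{prop:insert}. The most delicate bookkeeping is keeping the two symmetric existence conditions aligned, but this is immediate from the symmetry of Theorem~\ref{thm:main}.
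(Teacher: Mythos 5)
Your proposal is correct and follows exactly the paper's route: observe that the type $A$ Dynkin diagram is a path with all single edges so every multiplicity is $1$, invoke Proposition~\ref{prop:insert} for uniqueness of the insertion path, and substitute into Corollary~\ref{cor:main}. Your additional paragraph addressing the two symmetric existence conditions via the remark after Theorem~\ref{thm:main} is a correct and slightly more explicit treatment of a point the paper leaves implicit.
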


\section{Fast algorithms for computation}\label{sec:algo}
In this section, we work in type $A_n$. We provide Algorithm~\ref{algo} that determines whether there exists a non-equivariant boolean insertion path $u\op{\supp(v)}w$ for boolean elements $u,v,w\in W$. This algorithm works by finding a good ordering of $S(v)$ for the insertion paths. The correctness and the time complexity of the algorithm is provided in Theorem~\ref{thm:work}. By Corollary~\ref{cor:main}, we can calculate the structure constants $c_{uv}^w$ for boolean permutations in the symmetric group in $O(n^2)$ time as well. 

\begin{algorithm}[h!]
\begin{algorithmic}[1]
\caption{Construction of a boolean insertion path}\label{algo}
\REQUIRE Boolean elements $u,v,w\in W$.
\ENSURE A boolean insertion path $u\op{\supp(v)}w$ if it exists.
    \STATE Initialize $B=B(u)$, $P$ to be an empty list and $S=\supp(v)$. 
    \STATE Check whether $B(u)\subseteq B(w)$. If not, return $\none$.
    \STATE For each $i\in S$ in increasing order, try the boolean insertion $B\op{i}$. If there is only one possible insertion step $B\op{i} B'$ satisfying $B'\subseteq B(w)$, remove $i$ from $S$, append this step to $P$ and replace $B$ by $B'$. If no such $B'\subseteq B(w)$ exists, return $\none$.
    \STATE Repeat Step 3 until no such insertions are available.
    \STATE Let the remaining vertices in $S$ and $B(w)\setminus B$ be $i_1<i_2<\cdots <i_m$ and $j_1<j_2<\cdots <j_m$ respectively. For $k=1,\ldots,m$ in increasing order, do $B\op{i_k}B'$ such that the newly added vertex in $B'$ is exactly $j_k$ and that $B'\subseteq B(w)$. Append this sequence of insertions to $P$ if they exist and return $\none$ if not.
    \STATE Return $P$.
\end{algorithmic}
\end{algorithm}

\begin{ex}\label{ex:algo}
    Let $u=s_4s_3s_8s_{11}s_{12}$, $v=s_2s_3s_7s_6s_8s_{12}$ and $ w=s_7s_6s_5s_4s_2s_3s_9s_8s_{11}s_{13}s_{12}$. In Algorithm~\ref{algo}, we begin with $B=B(u_0)$ where $u_0=u$ and $S=\supp(v)=\{2,3,6,7,8,12\}$. The boolean diagrams of $u$ and $w$ and all steps in Algorithm~\ref{algo} are shown in Figure~\ref{fig:algo}. In the end, we obtain a boolean insertion path $u\op{\supp(v)}w$. In fact, there is a boolean insertion path $v\op{\supp(u)}w$ as well. Thus, by Corollary~\ref{cor:0-1}, $c_{uv}^w=1$.
    \begin{figure}[H]
    \centering
    \subfigure[In each step, we should keep $B\subseteq B(w)$]{
    \begin{tabular}{cc}
         $B(w)$
         & \begin{dynkinDiagram}[labels={1,2,3,4,5,6,7,8,9,10,11,12,13},edge length=.75cm,mark=o]A{13}
        \foreach\r in {2,3,4,5,6,7,8,9,11,12,13} {\dynkinRootMark{*}\r}
        \draw[very thick, black,-latex]
            (root 3) to (root 2);
        \draw[very thick, black,-latex]
            (root 3) to (root 4);
        \draw[very thick, black,-latex]
            (root 4) to (root 5);
        \draw[very thick, black,-latex]
            (root 5) to (root 6);
        \draw[very thick, black,-latex]
            (root 6) to (root 7);
        \draw[very thick, black,-latex]
            (root 8) to (root 7);
        \draw[very thick, black,-latex]
            (root 8) to (root 9);
        \draw[very thick, black,-latex]
            (root 12) to (root 11);
        \draw[very thick, black,-latex]
            (root 12) to (root 13);
    \end{dynkinDiagram}
    \end{tabular}
    }
    \subfigure[Initialize, $S=\{2,3,6,7,8,12\}$]{
    \begin{tabular}{cc}
         $B(u_0)$
         & \begin{dynkinDiagram}[labels={1,2,3,4,5,6,7,8,9,10,11,12,13},edge length=.75cm,mark=o]A{13}
        \foreach\r in {3,4,8,11,12} {\dynkinRootMark{*}\r}
        \draw[very thick, black,-latex]
            (root 3) to (root 4);
        \draw[very thick, black,-latex]
            (root 12) to (root 11);
    \end{dynkinDiagram}
    \end{tabular}
    }
    \subfigure[$u_0\op{2}u_1$ in Step 3, $S=\{3,6,7,8,12\}$]{
    \begin{tabular}{cc}
         $B(u_1)$&  \begin{dynkinDiagram}[labels={1,2,3,4,5,6,7,8,9,10,11,12,13},edge length=.75cm,mark=o]A{13}
        \foreach\r in {2,3,4,8,11,12} {\dynkinRootMark{*}\r}
        \draw[very thick, black,-latex]
            (root 3) to (root 2);
        \draw[very thick, black,-latex]
            (root 3) to (root 4);
        \draw[very thick, black,-latex]
            (root 12) to (root 11);
    \end{dynkinDiagram}
    \end{tabular}
    }
    \subfigure[$u_1\op{6}u_2$ in Step 3, $S=\{3,7,8,12\}$]{
    \begin{tabular}{cc}
         $B(u_2)$&\begin{dynkinDiagram}[labels={1,2,3,4,5,6,7,8,9,10,11,12,13},edge length=.75cm,mark=o]A{13}
        \foreach\r in {2,3,4,6,8,11,12} {\dynkinRootMark{*}\r}
        \draw[very thick, black,-latex]
            (root 3) to (root 2);
        \draw[very thick, black,-latex]
            (root 3) to (root 4);
        \draw[very thick, black,-latex]
            (root 12) to (root 11);
    \end{dynkinDiagram}
    \end{tabular}
    }
    \subfigure[$u_2\op{7}u_3$ in Step 3, $S=\{3,8,12\}$]{
    \begin{tabular}{cc}
         $B(u_3)$&\begin{dynkinDiagram}[labels={1,2,3,4,5,6,7,8,9,10,11,12,13},edge length=.75cm,mark=o]A{13}
        \foreach\r in {2,3,4,6,7,8,11,12} {\dynkinRootMark{*}\r}
        \draw[very thick, black,-latex]
            (root 3) to (root 2);
        \draw[very thick, black,-latex]
            (root 3) to (root 4);
        \draw[very thick, black,-latex]
            (root 12) to (root 11);
        \draw[very thick, black,-latex]
            (root 6) to (root 7);
        \draw[very thick, black,-latex]
            (root 8) to (root 7);
    \end{dynkinDiagram}
    \end{tabular}
    }
    \subfigure[$u_3\op{8}u_4$ in Step 3, $S=\{3,12\}$]{
    \begin{tabular}{cc}
         $B(u_4)$&\begin{dynkinDiagram}[labels={1,2,3,4,5,6,7,8,9,10,11,12,13},edge length=.75cm,mark=o]A{13}
        \foreach\r in {2,3,4,6,7,8,9,11,12} {\dynkinRootMark{*}\r}
        \draw[very thick, black,-latex]
            (root 3) to (root 2);
        \draw[very thick, black,-latex]
            (root 3) to (root 4);
        \draw[very thick, black,-latex]
            (root 12) to (root 11);
        \draw[very thick, black,-latex]
            (root 6) to (root 7);
        \draw[very thick, black,-latex]
            (root 8) to (root 7);
        \draw[very thick, black,-latex]
            (root 8) to (root 9);
    \end{dynkinDiagram} 
    \end{tabular}
    }
    \subfigure[$u_4\op{3}u_5$ in Step 5, $S=\{12\}$]{
    \begin{tabular}{cc}
         $B(u_5)$&\begin{dynkinDiagram}[labels={1,2,3,4,5,6,7,8,9,10,11,12,13},edge length=.75cm,mark=o]A{13}
        \foreach\r in {2,3,4,5,6,7,8,9,11,12} {\dynkinRootMark{*}\r}
        \draw[very thick, black,-latex]
            (root 3) to (root 2);
        \draw[very thick, black,-latex]
            (root 3) to (root 4);
        \draw[very thick, black,-latex]
            (root 12) to (root 11);
        \draw[very thick, black,-latex]
            (root 6) to (root 7);
        \draw[very thick, black,-latex]
            (root 8) to (root 7);
        \draw[very thick, black,-latex]
            (root 8) to (root 9);
        \draw[very thick, black,-latex]
            (root 4) to (root 5);
        \draw[very thick, black,-latex]
            (root 5) to (root 6);
    \end{dynkinDiagram} 
    \end{tabular}
    }
    \subfigure[$u_5\op{12}u_6$ in Step 5, $S=\varnothing$]{
    \begin{tabular}{cc}
         $B(u_6)$&\begin{dynkinDiagram}[labels={1,2,3,4,5,6,7,8,9,10,11,12,13},edge length=.75cm,mark=o]A{13}
        \foreach\r in {2,3,4,5,6,7,8,9,11,12,13} {\dynkinRootMark{*}\r}
        \draw[very thick, black,-latex]
            (root 3) to (root 2);
        \draw[very thick, black,-latex]
            (root 3) to (root 4);
        \draw[very thick, black,-latex]
            (root 12) to (root 11);
        \draw[very thick, black,-latex]
            (root 6) to (root 7);
        \draw[very thick, black,-latex]
            (root 8) to (root 7);
        \draw[very thick, black,-latex]
            (root 8) to (root 9);
        \draw[very thick, black,-latex]
            (root 4) to (root 5);
        \draw[very thick, black,-latex]
            (root 5) to (root 6);
        \draw[very thick, black,-latex]
            (root 12) to (root 13);
    \end{dynkinDiagram} 
    \end{tabular}
    }
    \caption{An example for Algorithm~\ref{algo}, that results in a boolean insertion path $u=u_0\op{2}u_1\op{6}u_2\op{7}u_3\op{8}u_4\op{3}u_5\op{12}u_6=w$.}
    \label{fig:algo}
    \end{figure}
\end{ex}
Now we explain why Algorithm~\ref{algo} works.
\begin{theorem}\label{thm:work}
    Algorithm~\ref{algo} returns a boolean insertion path $u\op{\supp(v)}w$ if it exists and otherwise returns $\none$. The runtime of Algorithm~\ref{algo} is $O(n^2)$ in type $A_n$.
\end{theorem}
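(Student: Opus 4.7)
The plan is to verify the theorem in two parts: correctness of Algorithm~\ref{algo} and its $O(n^2)$ runtime in type $A_n$. Correctness itself splits into a \emph{soundness} direction (every returned sequence is a valid boolean insertion path $u\op{\supp(v)}w$) and a \emph{completeness} direction (if a valid path exists for some ordering of $\supp(v)$, then the algorithm returns one, not $\none$).

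Soundness I would treat by loop-invariant bookkeeping: throughout Steps 2--5 we maintain $B \subseteq B(w)$, every operation $B \op{i} B'$ appended to $P$ satisfies one of the non-equivariant cases of Definition~\ref{def:op}, and after Step 5 the running diagram $B$ equals $B(w)$. This last point follows because $|\supp(v)\setminus\supp(u)| \geq |B(w)\setminus B(u)|$ is a necessary condition for any path to exist, and the matching in Step 5 consumes each remaining $i_k$ against a distinct $j_k$.

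For completeness, I would formulate an \textbf{exchange lemma}: if a boolean insertion path $u \op{B} w$ exists for some ordering of $B = \supp(v)$, then any step producing a forced insertion (unique continuation $B'$ satisfying $B' \subseteq B(w)$) can be moved to the front without destroying validity. This justifies the greedy loop of Step 3. To handle Step 5, I would show that once Step 3 terminates, each remaining $i \in S$ has both of its Dynkin neighbors absent from $B$ but present in $B(w)$; since the type $A_n$ Dynkin diagram is a linear path, the remaining vertices of $B(w)\setminus B$ split into contiguous intervals, and an inductive matching—pairing the smallest remaining $i_k$ with the smallest remaining $j_k$—furnishes the unique valid completion.

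For the complexity bound, I would observe that every insertion step only queries the two Dynkin neighbors of $i$, so locating candidates for $B'$ and verifying $B' \subseteq B(w)$ is $O(1)$ per vertex. A standard work-queue implementation of Step 3, which re-examines a vertex only when one of its neighbors is inserted, bounds the total number of examinations by $O(n)$, giving the overall $O(n^2)$ bound including constant-time diagram updates; Step 5 is $O(n)$ after an $O(n)$ sort. The main obstacle I anticipate is the exchange lemma itself: establishing it rigorously requires a short case analysis on whether two consecutive inserted simple roots $i_1, i_2$ are equal, adjacent in the Dynkin path, or independent. The path structure of type $A$ keeps each case small, but spelling out the swap $B \op{i_1} B_1 \op{i_2} B_2 \leftrightarrow B \op{i_2} B_1' \op{i_1} B_2$ and checking $B_1' \subseteq B(w)$ is where the real combinatorial work lies.
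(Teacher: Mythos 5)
Your outline is sound, but the completeness half takes a genuinely different route from the paper, and the route you choose puts the burden in the wrong place. You plan to establish order-independence of the existence of a non-equivariant insertion path by an \emph{exchange lemma}, i.e.\ by a direct combinatorial swap of consecutive insertions $B\op{\beta_{i}}B_1\op{\beta_{i+1}}B_2 \leftrightarrow B\op{\beta_{i+1}}B_1'\op{\beta_{i}}B_2$, and you correctly flag that this is where the real work lies. The paper instead gets order-independence for free from Corollary~\ref{cor:monk}: since $\xi_u\prod_{\beta\in B}\xi_{s_\beta}$ is manifestly independent of the ordering of $B$, and its expansion in $\{\xi_w\}$ (mod $\nb$) has coefficients $\sum_{u\op{B}w}\mul(u\op{B}w)\wt(u\op{B}w)$ that are polynomials in the $t_\alpha$ with nonnegative coefficients and with constant term exactly counting the non-equivariant paths, the existence of a non-equivariant path $u\op{\supp(v)}w$ is automatically independent of the chosen ordering. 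This replaces an entire case analysis with a one-line appeal to an identity already proved. Your approach would work if carried through, but note that the swap lemma must also cover the case where one of the two consecutive steps is equivariant if you want to prove it at the level of generality suggested by Definition~\ref{def:op}; restricting to non-equivariant steps is fine here but still needs the cases where $\beta_i=\beta_{i+1}$, where they are Dynkin-adjacent, and where the inserted \emph{new vertices} (which can be far from $\beta_i$, at the end of a directed path) collide. Once order-independence is secured, both you and the paper justify Step~3 (forced insertions are safe to do greedily) and Step~5 (in type $A$ the remaining new vertices must appear in increasing order, forcing the pairing $i_k\leftrightarrow j_k$) by essentially the same argument.

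One small caution on the complexity: locating the candidate vertex for $B\op{i}B'$ when $i\in\supp(B)$ is not an $O(1)$ neighbor query, since the new vertex sits at the far end of a directed path from $i$ inside the current component of $B$; naively this is $O(n)$ per insertion, which is what the paper uses to get $O(n^2)$. Your work-queue idea could in principle tighten this to $O(n)$ total with boundary pointers per component, but as written the accounting is muddled (you claim $O(n)$ examinations and yet conclude $O(n^2)$). Either present the per-step $O(n)$ bound as the paper does, or carry out the amortized analysis explicitly; the stated bound of $O(n^2)$ is correct in both cases.
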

\begin{proof}
By Corollary~\ref{cor:monk}, we can choose to insert $\supp(v)$ in any order that we like. 
Throughout the process, the boolean diagram $B$ keeps track of the boolean permutation, and the end goal is $B(w)$. Therefore, after each step, we always need to make sure that $B$ is a subgraph of $B(w)$. We have also initialized $P$ to keep track of the boolean insertion path, and $S$ to keep track of the residue vertices in $\supp(v)$ which have not been inserted into $B$. Step 1 and 2 are necessary, with total runtime $O(n)$.

In Step 3, if the newly added vertex by some step $B\op{i}B^{\prime}$ is unique (see Figure~\ref{fig:algopf} for visualization), we must add it and make sure that the newly obtained boolean diagram is a subgraph of $B(w)$. Thus, this insertion step is unique if exists. The runtime of one insertion is $O(n)$ and the total runtime of Step 3 and 4 is $O(n^2).$

After Step 4, for each $i\in S$, there are $2$ potential new vertices that can be added to $B$ by some $B\op{i}B^{\prime}$ (see Figure~\ref{fig:algopf}).
     \begin{figure}[h!]
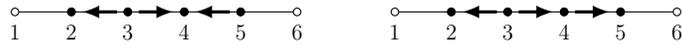

    \centering
    \begin{dynkinDiagram}[labels={1,2,3,4,5,6},edge length=.75cm,mark=o]A{6}
        \foreach\r in {2,3,4,5} {\dynkinRootMark{*}\r}
        \draw[very thick, black,-latex]
            (root 3) to (root 2);
        \draw[very thick, black,-latex]
            (root 3) to (root 4);
        \draw[very thick, black,-latex]
            (root 5) to (root 4);
    \end{dynkinDiagram}
    \qquad
    \begin{dynkinDiagram}[labels={1,2,3,4,5,6},edge length=.75cm,mark=o]A{6}
        \foreach\r in {2,3,4,5} {\dynkinRootMark{*}\r}
        \draw[very thick, black,-latex]
            (root 3) to (root 2);
        \draw[very thick, black,-latex]
            (root 3) to (root 4);
        \draw[very thick, black,-latex]
            (root 4) to (root 5);
    \end{dynkinDiagram}
    \caption{The unique new vertex can be added to the left boolean diagram $B$ by some $B\op{3}B'$ is $1$. All the possible new vertices can be added to the right boolean diagram by $\op{3}$ are $1$ and $6$.}
    \label{fig:algopf}
    \end{figure}
    Then we go to Step 5.
    Let $S=\{i_1<i_2<\cdots<i_m\}$ and let the vertices in $B(w)\setminus B$ be $j_1<j_2<\cdots<j_m$. We are now going to insert $S$ in this increasing order. Note that the new vertices added by $B\op{i_k}$ in the order $k=1,2,\cdots,m$ must lie on the directed Dynkin diagram in increasing order as well. Thus these new vertices must be $j_1,j_2,\cdots,j_m$ if there exists a boolean insertion path $u\op{\supp(v)}w$. The runtime of Step 5 is $O(n^2)$.

    Summing over the runtime of all the steps, we obtain $O(n^2)$.
\end{proof}
\begin{remark}
In arbitrary types, we can construct similar algorithms to find insertion paths and to compute structure constants $c_{uv}^w$ for boolean elements with the same time complexity $O(n^2)$, where $n=\mathrm{rank}(\Phi)\coloneqq\lvert\Delta\rvert$, the number of simple roots associated to the root system $\Phi$. Other directed Dynkin diagrams require analysis of edge cases on vertices with higher degrees, but the general idea stays the same.
\end{remark}
\section*{Acknowledgements}
We thank Prof. Anders Buch's equivariant Schubert calculator for calculations and we thank Weihong Xu, Rui Xiong and Alex Yong for pointing to us helpful references. Y.G is partially supported by NSFC Grant no. 12471309.

\bibliographystyle{plain}
\bibliography{ref}
\end{document}